\newcommand{\on}{\operatorname}
\newcommand{\f}{\mathfrak}
\newcommand{\al}{\alpha}
\newcommand{\D}{\mathcal{D}}
\def\wt{{\rm wt}}
\def\de{\delta}
\def\C{{\mathbb C}}
\def\Z{{\mathbb Z}}
\def\N{{\mathbb N}}
\def\1{{\bf 1}}
\def\l{\lambda}
\def \Hom{{\rm Hom}}
\def \<{\langle}
\def \>{\rangle}
\def \wg{{\widehat{\frak{g}}}}
\def \wh{{\widehat{\frak{h}}}}
\def\wo{{\widehat{\frak{so}}}}
\def \sl{\frak{sl}}
\def \l{\lambda}
\def \fg{\frak{g}}
\def \wg{{\widehat{\frak{g}}}}
\def \wh{{\widehat{\frak{h}}}}
\def \sl{\frak{sl}}
\def \l{\lambda}
\numberwithin{equation}{section}
\newtheorem{theorem}{Theorem}[section]
\newtheorem{lem}[theorem]{Lemma}
\newtheorem{remark}[theorem]{Remark}
\theoremstyle{definition}
\newtheorem{defn}[theorem]{Definition}
\begin{document}
\title[Vertex Operator Algebras]{Level-Rank Duality for Vertex Operator Algebras of types $B$ and $D$}

\author{ Cuipo  Jiang} \thanks{Jiang is supported by NSFC Grants11771281,  11531004}
\address[Jiang]{School  of Mathematical Sciences,  Shanghai Jiao Tong University, Shanghai, 200240, China }
\email{cpjiang@sjtu.edu.cn}

\author{Ching Hung Lam}\thanks{Lam is supported by MoST
grant 104-2115-M-001-004-MY3 of Taiwan}
\address[Lam]{Institute of Mathematics, Academia Sinica, Taiwan}
\email{chlam@math.sinica.edu.tw}

\begin{abstract}
For the simple Lie algebra $ \frak{so}_m$, we study the commutant vertex operator
  algebra of $ L_{\widehat{\frak{so}}_{m}}(n,0)$ in the $n$-fold tensor product
  $ L_{\widehat{\frak{so}}_{m}}(1,0)^{\otimes n}$. It turns out that this commutant vertex
  operator algebra can be realized as a fixed point subalgebra of  $L_{\widehat{\frak{so}}_{n}}(m,0)$ (or its simple current extension) associated with a certain abelian group.  This result may be viewed as a version of level-rank duality.
\end{abstract}
\subjclass[2010]{17B69}
\keywords{level rank duality, vertex operator algebras, affine Lie algebras}

\maketitle
\section{Introduction}
Let $\frak{g}$ be a finite dimensional simple Lie algebra and $\widehat{\frak{g}}$ the associated affine Lie algebra. Let $L_{\wg}(1,0)$ be  the basic representation of $\wg$. Then $L_{\wg}(1,0)$ is a rational vertex operator algebra.
  For
  $l \in \N$, the tensor product  $L_\wg(1,0) ^{\otimes l}$ is still rational and the diagonal action of $\hat{\frak{g}}$ on $L_\wg(1,0) ^{\otimes l}$ defines a vertex subalgebra $L_{\wg}(l,0)$ of level $l$.  As a module of the vertex operator algebra $L_{\wg}(l,0)$, $L_\wg(1,0) ^{\otimes l}$  is a  direct sum of irreducible $\wg$-modules:
 \begin{equation} \label{eq:decomposition} L_{\wg}(1,0)^{\otimes l}=\bigoplus L_\wg(l, \bar{\Lambda})\otimes_{\C}M_\wg(l, \bar{\Lambda}),
 \end{equation}
where $L_{\wg}(l, \bar{\Lambda})$ are level $l$ irreducible  $\wg$-modules and $M_\wg(l, \bar{\Lambda})=\Hom_{\wg}(L_\wg(l, \bar{\Lambda}), L_{\wg}(1,0)^{\otimes l})$ are vector spaces. The subspace $M_\wg(l, 0)$ is a vertex operator algebra, which is  called the commutant (or coset) of $L_{\wg}(l,0)$ in
$L_\wg(1,0) ^{\otimes l}$ and is denoted by $C_{L_{\wg}(1,0)^{\otimes l}}(L_{\wg}(l,0))$. Commutant vertex operator algebras  initiated in \cite{GKO1, GKO2} were first introduced in [FZ] from the point view of vertex operator algebras. Since then describing commutant vertex operator algebras has been one of the most interesting questions in the theory of vertex operator algebras. Many interesting examples, especially coset vertex operator algebras related to affine vertex operator algebras,  have been  extensively studied both in the physics and mathematics literatures  \cite{AP}, \cite{ALY1}, \cite{ALY2}, \cite{BEHHH}, \cite{BFH},  \cite{ChL}, \cite{CL}, \cite{DJX}, \cite{DLWY}, \cite{DLY1}, \cite{DLY2}, \cite{DW1}, \cite{DW2}, \cite{GQ}, \cite{JL1}, \cite{JL2}, \cite{La}, \cite{LS}, \cite{LY}, etc.

For $k\in \Z_{+}$, let $L_{\wh}(k,0)$ be the Heisenberg vertex operator subalgebra of $L_{\wg}(k,0)$ associated with a Cartan subalgebra.  The  commutant of $L_{\wh}(k,0)$ in $L_{\wg}(k,0)$, denoted by $ K(\fg, k)$, is the so called  parafermion vertex operator algebra \cite{ZF}. Parafermion vertex operator algebras have been studied extensively \cite{ALY1}-\cite{ALY2}, \cite{CGT}, \cite{DLWY}, \cite{DLY2},   \cite{DW1}-\cite{DW2}, \cite{JL1}, \cite{LY},  etc.
It was proved in \cite{La} and \cite{JL2} independently that $C_{L_{\widehat{\frak{sl}_n}}(1,0)^{\otimes l}}(L_{\widehat{\frak{sl}_n}}(l,0)) \cong
 K(\sl_l,n)$ as vertex operator algebras, which presents a version of level-rank duality. More generally, given a sequence of positive integers
$\underline{\ell}=(l_1, \cdots l_s)$, the tensor product vertex operator algebra
$L_{\wg}(\underline{\ell}, 0)= L_{\wg}(l_1, 0)\otimes L_{\wg}(l_2, 0)\otimes
\cdots \otimes L_{\wg}(l_s,0)$
has a vertex operator subalgebra isomorphic to
$ L_\wg(|\underline{\ell}|,0)$ with $|\underline{\ell}|=l_1+\cdots+l_s$.  The sequence
$\underline{\ell}$ defines a Levi subalgebra $\frak{l}_{\underline{\ell}}$ of
$\frak{sl}_{|\underline{\ell}|}$. Denote by $L_{\widehat{\frak{l}_{\underline{\ell}}}}(n,0)$ the vertex operator subalgebra of
$ L_{\widehat{\frak{sl}_{|\underline{\ell}|}}}(n, 0)$ generated by
$\frak{l}_{\underline{\ell}}$.
Set $K(\frak{sl}_{|\underline{\ell}|}, \frak{l}_{\underline{\ell}}, n)=
C_{L_{\widehat{\frak{sl}_{|\underline{\ell}|}}}(n, 0)}(L_{\widehat{\frak{l}_{\underline{\ell}}}}(n, 0)).$
It was established in \cite{JL2} that $C_{L_{\widehat{\frak{sl}_n}}(\underline{\ell}, 0)}(L_{\widehat{\frak{sl}_n}}(|\underline{\ell}|, 0))\cong K(\frak{sl}_{|\underline{\ell}|}, \frak{l}_{\underline{\ell}}, n)$,  presenting a more general version of level-rank duality.

In this article, we try to generalize the results in \cite{La} and \cite{JL2} to a complex finite-dimensional simple Lie algebra $\f{g}$  of type $B$ or $D$. For $l\in \N$, let $L_\wg(1,0) ^{\otimes l}$ be the tensor product of the affine vertex operator algebra $L_\wg(1,0)$. The main aim is to study the commutant of $L_\wg(l,0)$ in $L_\wg(1,0) ^{\otimes l}$.
 We will determine the structure of this commutant and establish a version of  level-rank duality for these two cases. The idea is to embed the tensor product vertex operator algebra into a large  lattice vertex operator algebra (or superalgebra) associated with a root lattice of type $D$ or $B$ and to use  the fermionic construction of the affine vertex operator algebra $ L_{\widehat{\frak{so}}_{m}}(1,0)$.  It turns out that the tensor product
$ L_{\widehat{\frak{so}}_{m}}(1,0)^{\otimes n}$ is isomorphic to the orbifold VOA $L_{\wo_{mn}}(1,0)^{G}$ for some abelian subgroup $G < Aut(L_{\wo_{mn}}(1,0))$. Using this fact and some results on conformal embeddings of vertex operator algebras given in \cite{KFPX}, we show that

(1) For $m, n\geq 4$,
$$
C_{L_{\wo_{m}}(1,0)^{\otimes n}}(L_{\wo_{m}}(n,0))=L_{\wo_{n}}(m,0)^G$$
if $m$ or $n$ is odd, and
$$
C_{L_{\wo_{m}}(1,0)^{\otimes n}}(L_{\wo_{m}}(n,0))=(L_{\wo_{n}}(m,0)\oplus L_{\wo_{n}}(m,m\Lambda_1))^G$$ if both $m, n$ are even, where  we mean $L_{\widehat{\frak{sl}_2}}(m,m)\otimes L_{\widehat{\frak{sl}_2}}(m,m)$ by $L_{\wo_{n}}(m,m\Lambda_1)$ if $n=4$.

(2) For $m\geq 4$ and $n=3$,
$$C_{L_{\wo_{m}}(1,0)^{\otimes 3}}(L_{\wo_{m}}(3,0))=(L_{\widehat{\frak{sl}_2}}(2m,0)\oplus L_{\widehat{\frak{sl}_2}}(2m,2m))^G$$
if $m$ is even, and
$$C_{L_{\wo_{m}}(1,0)^{\otimes 3}}(L_{\wo_{m}}(3,0))=L_{\widehat{\frak{sl}_2}}(2m,0)^G$$
if $m$ is odd.

 The paper is organized as follows.  In Section 2, we briefly review some basic notations and facts on vertex operator algebras. In Section 3, we recall the definition of fermionic vertex superalgebras and a construction of the affine VOA $L_{\wo_{n}}(m,0)$ using the fermionic vertex superalgebras. In Section 4, we discuss and prove our main results.

\section{Preliminaries}
\setcounter{equation}{0}
%\subsection{}
Let $V=(V,Y,{\bf 1},\omega)$ be a vertex operator algebra \cite{B},
\cite{FLM}, \cite{LL}. We review various notions of $V$-modules and the definition of rational vertex operator algebras and some basic facts 
\cite{FLM}, \cite{Z}, \cite{DLM3}, \cite{LL}.

\begin{defn} A weak $V$-module is a vector space $M$ equipped
with a linear map
$$
\begin{array}{ll}
Y_M: & V \rightarrow {\rm End}(M)[[z,z^{-1}]]\\
 & v \mapsto Y_M(v,z)=\sum_{n \in \Z}v_n z^{-n-1},\ \ v_n \in {\rm End}(M)
\end{array}
$$
satisfying the following:

1) $v_nw=0$ for $n>>0$ where $v \in V$ and $w \in M$;

2) $Y_M( {\textbf 1},z)=\on{id}_M$;

3) the Jacobi identity holds:
\begin{eqnarray}
& &z_0^{-1}\de \left({z_1 - z_2 \over
z_0}\right)Y_M(u,z_1)Y_M(v,z_2)-
z_0^{-1} \de \left({z_2- z_1 \over -z_0}\right)Y_M(v,z_2)Y_M(u,z_1) \nonumber \\
& &\ \ \ \ \ \ \ \ \ \ =z_2^{-1} \de \left({z_1- z_0 \over
z_2}\right)Y_M(Y(u,z_0)v,z_2).
\end{eqnarray}
\end{defn}

%admissible

\begin{defn}
An admissible $V$-module is a weak $V$-module  which carries a
$\Z_+$-grading $M=\bigoplus_{n \in \Z_+} M(n)$, such that if $v \in
V_r$ then $v_m M(n) \subseteq M(n+r-m-1).$
\end{defn}

\begin{defn}
An ordinary $V$-module is a weak $V$-module which carries a
$\C$-grading $M=\bigoplus_{\l \in \C} M_{\l}$ such that

1) $\dim(M_{\l})< \infty;$

2) $M_{\l+n}=0$ for fixed $\l$ and $n<<0;$

3) $L(0)w=\l w=\wt(w) w$ for $w \in M_{\l}$ where $L(0)$ is the
component operator of $Y_M(\omega,z)=\sum_{n\in\Z}L(n)z^{-n-2}.$
\end{defn}

\begin{remark} \ It is easy to see that an ordinary $V$-module is admissible. If $W$  is an
ordinary $V$-module, we simply call $W$ a $V$-module.
\end{remark}

We call a vertex operator algebra rational if the admissible module
category is semi-simple. We have the following result from
\cite{DLM3} (also see \cite{Z}).

\begin{theorem}\label{tt2.1}
If $V$ is a  rational vertex operator algebra, then $V$ has finitely
many irreducible admissible modules up to isomorphism and every
irreducible admissible $V$-module is ordinary.
\end{theorem}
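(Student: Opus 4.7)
The plan is to prove the theorem via Zhu's associative algebra $A(V)$, following the Dong-Li-Mason strategy. The idea is to translate the problem about admissible $V$-modules into a problem about modules over a classical associative algebra, where standard finite-dimensional representation theory becomes available. First I would recall the construction $A(V) = V/O(V)$, where $O(V)$ is spanned by elements $\Res_z Y(a,z) \frac{(1+z)^{\wt a}}{z^2} b$ for homogeneous $a$ and arbitrary $b \in V$, together with its associative product. For any admissible module $M = \bigoplus_{n \geq 0} M(n)$, the top level $M(0)$ carries a natural $A(V)$-action via zero modes $[a] \cdot m = o(a) m$, where $o(a) = a_{\wt a - 1}$ for homogeneous $a$. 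Zhu's correspondence then asserts that the functor $\Omega : M \mapsto M(0)$ induces a bijection between isomorphism classes of irreducible admissible $V$-modules and irreducible $A(V)$-modules, the inverse sending an irreducible $A(V)$-module $U$ to the unique irreducible quotient $L(U)$ of the generalized Verma module $\bar{M}(U)$ induced from $U$, with $L(U)(0) \cong U$.

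Next I would translate the rationality hypothesis into structural properties of $A(V)$. Complete reducibility of the admissible module category forces $A(V)$ to be semisimple: any non-split extension of $A(V)$-modules would induce, via $\bar M(-)$, a non-split extension of admissible $V$-modules, contradicting rationality. A companion argument exploiting that every irreducible admissible module is cyclically generated from its top level, together with the decomposition of $V$ itself as an admissible module, shows that $A(V)$ is finite-dimensional. By Wedderburn's theorem a finite-dimensional semisimple $\C$-algebra admits only finitely many irreducibles up to isomorphism, so Zhu's correspondence delivers the finiteness part of the theorem.

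To show each irreducible admissible module $M$ is ordinary, I would use that $L(0) = o(\omega)$ acts on $M(0)$ and commutes with every zero mode $o(a)$, since $[L(0), a_{\wt a - 1}] = 0$ for homogeneous $a$. Hence $L(0)$ is an $A(V)$-module endomorphism of the irreducible module $M(0)$; as $M(0)$ has at most countable dimension over the algebraically closed field $\C$, Schur's lemma forces $L(0)$ to act on $M(0)$ by a scalar $\lambda$. The admissible $\Z_{\geq 0}$-grading then refines to the $L(0)$-eigenspace decomposition $M = \bigoplus_{n \geq 0} M_{\lambda + n}$, and each graded piece is finite-dimensional because $M$ is a graded quotient of $\bar{M}(M(0))$, whose graded pieces are finite-dimensional once $M(0)$ is (which follows from its being irreducible over the finite-dimensional algebra $A(V)$).

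The main obstacle will be establishing finite-dimensionality of $A(V)$ under the rationality hypothesis alone, without invoking any $C_2$-cofiniteness or similar auxiliary assumption. Semisimplicity of $A(V)$ is comparatively immediate from complete reducibility, but dimensional control requires a subtle bootstrap between Zhu's correspondence, the cyclic structure of irreducible admissible modules, and the decomposition of $V$ itself; this is the genuine technical heart of the Dong-Li-Mason argument.
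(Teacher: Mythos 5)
First, a remark on the comparison itself: the paper does not prove Theorem \ref{tt2.1} at all — it is quoted as background from \cite{DLM3} (see also \cite{Z}) — so there is no in-paper argument to measure your proposal against. What you have written is a reconstruction of the Dong--Li--Mason strategy via the Zhu algebra $A(V)$, and the overall architecture is the right one: the bijection $M\mapsto M(0)$ between irreducible admissible $V$-modules and irreducible $A(V)$-modules, semisimplicity of $A(V)$ extracted from complete reducibility of the generalized Verma modules $\bar M(U)$ (whose degree-zero piece is $U$, so that $U$ decomposes into the irreducible top levels of the irreducible summands), and the Schur-lemma argument using centrality of $[\omega]$ and countable-dimensionality of $M(0)$ to get $L(0)=\lambda+n$ on $M(n)$.

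That said, two steps are genuinely missing or wrong. (i) You defer the finite-dimensionality of $A(V)$ to an unspecified ``bootstrap'' involving the decomposition of $V$ itself; in fact the standard route is purely ring-theoretic: once every $A(V)$-module is semisimple, $A(V)$ is a semisimple ring, hence by Wedderburn--Artin a finite product of matrix algebras over division rings, and since $A(V)=V/O(V)$ is countable-dimensional over $\C$ those division rings must be $\C$; this yields finite-dimensionality and the finiteness of the set of irreducibles simultaneously, so the obstacle you flag is not where the real difficulty lies. (ii) More seriously, your argument that each $M(n)$ is finite-dimensional is incorrect as stated. The degree-$n$ piece of the generalized Verma module $\bar M(U)$ is spanned by vectors of the form $a_{\wt a-1-n}u$ (and products of modes) with $a$ ranging over homogeneous elements of $V$ of arbitrarily large weight, so it need not be finite-dimensional even when $U$ is; finite-dimensionality of such graded pieces is exactly the kind of $C_1$-/$C_2$-cofiniteness-type condition that cannot be assumed here. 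In \cite{DLM3} the assertion $\dim M(n)<\infty$ for an irreducible admissible $M$ is a separate, substantive part of the theorem, proved by invoking rationality again rather than merely through $\dim A(V)<\infty$. As written, your proposal delivers the finiteness of the set of irreducibles modulo point (i), but does not establish that irreducible admissible modules are ordinary.
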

\section{Fermionic vertex superalgebras}

In this section, we recall the basic fact on infinite-dimensional Clifford algebras and the associated vertex operator superalgebras \cite{Fei, FFR,FF1,Fr, K1,K2,KWak1,KW,AP,Li1}. Let $m\in\Z_{+}$. The Clifford algebra ${\mathcal Cl}_{2m}$ is a complex associative algebra generated by $\psi_i^{\pm}(r), 1\leq i\leq m$, $r\in\Z+\frac{1}{2}$, satisfying the non-trivial relations
$$
[\psi_{i}^{\pm}(r),\psi_{j}^{\mp}(s)]_+=\psi_{i}^{\pm}(r)\psi_{j}^{\mp}(s)+\psi_{j}^{\mp}(s)\psi_{i}^{\pm}(r)=\delta_{r+s,0}\delta_{ij},
$$
where $1\leq i,j\leq m$, $r,s\in\Z+\frac{1}{2}$.

Let ${\mathcal F}_{2m}$ be the irreducible ${\mathcal Cl}_{2m}$-module generated by the cyclic vector ${\bf 1}$ such that
$$
\psi^{\pm}_{i}(r){\bf 1}=0, \ {\rm for} \ r>0, \ 1\leq i\leq m.
$$
Define the  fields $\psi_{i}^{\pm}(z)$, $1\leq i\leq m$, on ${\mathcal F}_{2m}$ by
$$
\psi_{i}^{\pm}(z)=\sum\limits_{r\in\Z}\psi^{\pm}_{i}(r+\frac{1}{2})z^{-r-1}.
$$
The fields $\psi^{\pm}_{i}(z)$, $1\leq i\leq m$, generate ${\mathcal F}_{2m}$, which has a unique structure  of a simple vertex superalgebra \cite{FFR,K2,KW,Li1,AP}.

For $r,k\in\Z_{+}$, let $L_{\wo_{r}}(k,0)$  be the simple vertex operator algebra associated to the integrable module of the affine orthogonal Lie algebra $\wo_r$ with level $k$. The following result is well known \cite{FF1}.
\begin{theorem}\label{t3.1}
Let ${({\mathcal F}_{2m})}^{even}$ be the even part of the vertex superalgebra
${\mathcal F}_{2m}$. Then
$$
{({\mathcal F}_{2})}^{even}\cong V_{\sqrt{2}A_1}$$
and
$$
{({\mathcal F}_{2m})}^{even}\cong L_{\wo_{2m}}(1,0),$$
for $m\geq 2$.
\end{theorem}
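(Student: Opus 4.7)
The plan is to use the classical bilinear realization of $\wo_{2m}$ at level $1$ by quadratic fermion currents, together with the boson-fermion correspondence, to identify $({\mathcal F}_{2m})^{even}$ with the lattice vertex operator algebra $V_{D_m}$ and hence with $L_{\wo_{2m}}(1,0)$.

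For the first step, I would introduce, for $1\le i\le m$ and $1\le i<j\le m$, the bosonic bilinears
$$h_i(z)={:}\psi_i^+(z)\psi_i^-(z){:},\qquad {:}\psi_i^\pm(z)\psi_j^\mp(z){:},\qquad {:}\psi_i^\pm(z)\psi_j^\pm(z){:},$$
all of which lie in $({\mathcal F}_{2m})^{even}$. A routine Wick-theorem computation using the basic OPE $\psi_i^+(z)\psi_j^-(w)\sim\delta_{ij}/(z-w)$ shows that the modes of these currents close under commutators into the affine Kac-Moody algebra $\wo_{2m}$: the zero modes span $\frak{so}_{2m}$ with root system of type $D_m$ (Cartan spanned by the $h_i$ and root vectors attached to the roots $\pm\varepsilon_i\pm\varepsilon_j$), and a double contraction identifies the level as $1$.

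To match this affine subalgebra with the entire even part I would invoke the boson-fermion correspondence. Each charged pair $\{\psi_i^+,\psi_i^-\}$ realizes a $bc$-system equivalent to the lattice vertex superalgebra $V_{\Z}$ under the identification $\psi_i^\pm\leftrightarrow e^{\pm\varepsilon_i}$. Tensoring over $i$, one obtains ${\mathcal F}_{2m}\cong V_{\Z^m}$ as vertex superalgebras, and the fermion parity of a state corresponds to the parity of its total charge; hence the even parity subspace is exactly the subspace supported on the even-sum sublattice, namely the $D_m$ root lattice. Therefore $({\mathcal F}_{2m})^{even}\cong V_{D_m}$, and the Frenkel--Kac construction of the basic representation of $\widehat{D}_m$ then yields $V_{D_m}\cong L_{\wo_{2m}}(1,0)$, completing the argument.

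The principal obstacle is not the construction of the embedding, which is the classical bilinear realization of $\frak{so}_{2m}$ inside the Clifford algebra, but rather the surjectivity: one must verify that the affine subalgebra generated by the bilinears above fills out all of $({\mathcal F}_{2m})^{even}$. The boson-fermion correspondence packages this surjectivity cleanly as the lattice identification $({\mathcal F}_{2m})^{even}\cong V_{D_m}$. A more elementary but computationally heavier alternative would be to compare graded characters directly, which ultimately reduces to the Jacobi triple product identity.
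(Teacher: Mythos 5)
Your argument is correct and is the standard proof of this statement: the paper itself gives no proof, merely citing the result as well known, and the underlying construction (the Feingold--Frenkel spinor realization together with the boson--fermion correspondence) proceeds exactly as you do, identifying $({\mathcal F}_{2m})^{even}$ with the lattice vertex operator algebra $V_{D_m}$ and then invoking the Frenkel--Kac construction to get $L_{\wo_{2m}}(1,0)$. The only points worth making explicit are that the identification $\psi_i^{\pm}\leftrightarrow e^{\pm\varepsilon_i}$ requires a compatible choice of $2$-cocycle on $\Z^m$, and that for $m\ge 2$ the even-sum sublattice $D_m$ is generated by its norm-$2$ vectors, so the quadratic currents do generate all of $V_{D_m}$; both verifications are routine.
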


Let $m\in\Z_{+}$. We  now consider the Clifford algebra ${\mathcal Cl}_{2m+1}$ generated by
$$
\psi_{i}^{\pm}(r), \ \psi_{2m+1}(r), \ r\in\Z+\frac{1}{2}, \ 1\leq i\leq m
$$
with the  non-trivial relations
$$
[{\psi_{i}^{\pm}(r),\psi_{k}^{\mp}(s)}]_{+}=\delta_{ik}\delta_{r+s,0}, \
$$
$$
[\psi_{2m+1}(r),\psi_{2m+1}(s)]_+=\delta_{r+s,0},
$$
where $1\leq i,k\leq m$, $r,s\in\Z+\frac{1}{2}$. Let ${\mathcal F}_{2m+1}$ be the irreducible ${\mathcal Cl}_{2m+1}$-module generated by the cyclic vector ${\bf 1}$ such that
$$
\psi^{\pm}_{i}(r){\bf 1}=\psi_{2m+1}(r){\bf 1}=0, \ {\rm for} \ r>0, \ 1\leq i\leq m.
$$
Define the following fields on ${\mathcal F}_{2m+1}$ as follows:
$$
\psi_{i}^{\pm}(z)=\sum\limits_{r\in\Z}\psi^{\pm}_{i}(r+\frac{1}{2})z^{-r-1},
$$
$$
\psi_{2m+1}(z)=\sum\limits_{r\in\Z}\psi_{2m+1}(r+\frac{1}{2})z^{-r-1}.
$$
The fields $\psi^{\pm}_{i}(z)$, $\psi_{2m+1}(z)$, $1\leq i\leq m$, generate ${\mathcal F}_{2m+1}$, which has a unique structure  of a simple vertex superalgebra \cite{FFR,K2,KW,Li1,AP}.  By \cite{FF1}, we have the following theorem.
\begin{theorem}
For $m\in \Z_{\geq 2}$, we have
$$
({\mathcal F}_{2m+1})^{even}\cong L_{\wo_{2m+1}}(1,0).
$$
\end{theorem}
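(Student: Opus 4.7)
The plan is to mirror the strategy that proves Theorem \ref{t3.1} for $({\mathcal F}_{2m})^{even}$: construct the affine Lie algebra $\wo_{2m+1}$ at level $1$ as quadratic expressions in the free fermions $\psi_i^{\pm}(z)$ and $\psi_{2m+1}(z)$, obtain from this an embedding of VOAs, and finally check by a character computation that the embedding exhausts the even part of ${\mathcal F}_{2m+1}$.

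First I would realize $\wo_{2m+1}$ inside $({\mathcal F}_{2m+1})^{even}$. The Chevalley generators of $\mathfrak{so}_{2m+1}$, viewed as the Lie algebra of skew operators on a $(2m{+}1)$-dimensional space with a symmetric form, arise as quadratic combinations of the fermions: for $1\le i,j\le m$ take the normally ordered products
$$:\psi_i^{+}(z)\psi_j^{-}(z):,\ :\psi_i^{+}(z)\psi_j^{+}(z):,\ :\psi_i^{-}(z)\psi_j^{-}(z):,\ :\psi_i^{\pm}(z)\psi_{2m+1}(z):,$$
plus the diagonal Cartan fields $:\psi_i^+(z)\psi_i^-(z):$. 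A direct Wick calculation using the Clifford relations shows that the singular OPEs among these fields reproduce the commutation relations of the affine algebra $\wo_{2m+1}$, with the central term coming from the double contraction and normalized to level $1$ (here the presence of the single neutral fermion $\psi_{2m+1}$ is precisely what builds the extra short root that distinguishes $B_m$ from $D_m$).

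Second, by the universal property of the vacuum module $V_{\wo_{2m+1}}(1,0)$, the above quadratic fields extend to a VOA homomorphism $V_{\wo_{2m+1}}(1,0)\to ({\mathcal F}_{2m+1})^{even}$. Since ${\mathcal F}_{2m+1}$ is a simple vertex superalgebra and the image is generated by primary vectors of conformal weight $1$ producing a nonzero copy of $\mathfrak{so}_{2m+1}$, the kernel must be the maximal ideal of $V_{\wo_{2m+1}}(1,0)$, and the map factors through an embedding $L_{\wo_{2m+1}}(1,0)\hookrightarrow ({\mathcal F}_{2m+1})^{even}$.

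Third, to upgrade this embedding to an isomorphism I would compare graded dimensions. The $L(0)$-character of $({\mathcal F}_{2m+1})^{even}$ is
$$\on{ch}({\mathcal F}_{2m+1})^{even}=\tfrac{1}{2}\Bigl(\prod_{n\ge1}(1+q^{n-1/2})^{2m+1}+\prod_{n\ge1}(1-q^{n-1/2})^{2m+1}\Bigr),$$
obtained from the $(-1)^F$ projection on the full fermion Fock space. On the other hand the Weyl–Kac character of the basic representation $L_{\wo_{2m+1}}(1,0)$ of $B_m^{(1)}$, suitably specialized, is given by a Jacobi-type infinite product; the required identification is exactly the classical boson–fermion correspondence for type $B$, which can equivalently be phrased as the decomposition ${\mathcal F}_{2m+1}\cong L_{\wo_{2m+1}}(1,0)\oplus L_{\wo_{2m+1}}(1,\Lambda_m)$ with the spinor summand $L_{\wo_{2m+1}}(1,\Lambda_m)$ sitting entirely in the odd-parity part. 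The main obstacle is this final surjectivity step: the OPE verification and the factoring through the simple quotient are routine, whereas matching the graded dimensions requires either the explicit infinite-product identity for the level-one $B_m^{(1)}$ character or, equivalently, a proof that the spinor module is generated by odd fermionic vectors and thereby accounts for all of $({\mathcal F}_{2m+1})^{odd}$.
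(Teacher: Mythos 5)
The paper itself offers no argument for this statement: it is quoted directly from the literature (the Feingold--Frenkel/Frenkel fermionic construction), so your outline is in effect a reconstruction of that standard proof, and its first two steps (the quadratic fields $:\psi^{\pm}_i\psi^{\mp}_j:$, $:\psi^{\pm}_i\psi^{\pm}_j:$, $:\psi^{\pm}_i\psi_{2m+1}:$ closing on $\wo_{2m+1}$ at level $1$, and the induced map from the vacuum module) are the right ones. However, the step you yourself identify as the crux --- accounting for all of $({\mathcal F}_{2m+1})^{odd}$ --- is set up incorrectly: the odd part of the untwisted (Neveu--Schwarz) Fock space is \emph{not} the spinor module $L_{\wo_{2m+1}}(1,\Lambda_m)$. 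Its lowest-weight space is the $(2m+1)$-dimensional span of $\psi^{\pm}_i(-\tfrac12){\bf 1},\ \psi_{2m+1}(-\tfrac12){\bf 1}$ in conformal weight $\tfrac12$, i.e.\ it is the \emph{vector} module $L_{\wo_{2m+1}}(1,\Lambda_1)$ (indeed $\frac{(\Lambda_1,\Lambda_1+2\rho)}{2(1+h^{\vee})}=\frac{2m}{2(2m)}=\tfrac12$), whereas the spinor module has lowest conformal weight $\frac{2m+1}{16}$ and top space of dimension $2^m$; it occurs only in the twisted (Ramond) sector. If you tried to close the character identity with the spinor module, the bookkeeping would fail already at the leading term ($ (2m+1)q^{1/2}$ versus $2^m q^{(2m+1)/16}$). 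The correct decomposition is ${\mathcal F}_{2m+1}\cong L_{\wo_{2m+1}}(1,0)\oplus L_{\wo_{2m+1}}(1,\Lambda_1)$ with the even/odd splitting matching the two summands, and with that substitution your character argument does go through.

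A smaller point: simplicity of ${\mathcal F}_{2m+1}$ as a vertex superalgebra does not by itself force the image of $V_{\wo_{2m+1}}(1,0)$ to be the simple quotient (a simple VOA can contain non-simple subalgebras). The standard way to see that the vacuum-generated $\wo_{2m+1}$-submodule is irreducible is to use the positive-definite invariant Hermitian form on the fermionic Fock space: unitarity makes the Fock space completely reducible as a $\wo_{2m+1}$-module, and the cyclic submodule generated by ${\bf 1}$ is then $L_{\wo_{2m+1}}(1,0)$. Alternatively one checks directly that the singular vector generating the maximal ideal of the level-one vacuum module vanishes in ${\mathcal F}_{2m+1}$.
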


\section{Level-rank duality for affine vertex operator algebras of type $B$ and $D$}

In this section, we will study the tensor decomposition of affine vertex operator algebras of type $D$ and $B$.

\subsection{Type $D$}
For $m\geq 2$, let $L_{\wo_{2m}}(1,0)$ be the rational simple vertex operator algebra associated with the integrable highest weight module of the affine Lie algebra $\wo_{2m}$ with level 1. For $n\in\Z_{\geq 2}$,
denote
 $$
 V=L_{\wo_{2m}}(1,0)^{\otimes n}.$$
 Then
 $$
 U=L_{\wo_{2m}}(n,0)
  $$
can be diagonally imbedded into $V$ as a vertex operator subalgebra. Our main aim in this section is to study the commutant $C_V(U)$ of $U$ in $V$. The proof of the following lemma is quite similar to the proof of Lemma 3.1 and Lemma 3.5 in \cite{JL2}.
\begin{lem}\label{dual-pair-2}
The commutant $C_V(U)$ is a simple vertex operator subalgebra of $V$, and $(U, C_V(U))$ is a duality pair in $V$, i.e., $C_V(C_V(U)) =U$.
\end{lem}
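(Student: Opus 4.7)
The plan is to follow the template of Lemmas 3.1 and 3.5 of \cite{JL2} with the modifications required for type $D$. Since $L_{\wo_{2m}}(1,0)$ is a rational simple vertex operator algebra, the tensor product $V=L_{\wo_{2m}}(1,0)^{\otimes n}$ is itself rational and simple, and the diagonal subalgebra $U=L_{\wo_{2m}}(n,0)$ is rational and simple. Using rationality of $U$, decompose $V$ as a $U$-module in the form
\begin{equation*}
V \;=\; \bigoplus_{\bar\Lambda} L_{\wo_{2m}}(n,\bar\Lambda) \otimes_{\C} M_{\wo_{2m}}(n,\bar\Lambda),
\end{equation*}
paralleling \eqref{eq:decomposition}, and identify the $\bar\Lambda=0$ multiplicity space $M_{\wo_{2m}}(n,0)$ with $C_V(U)$, equipped with its induced vertex operator algebra structure.

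For simplicity of $C_V(U)$, the argument proceeds by contradiction: given a non-zero proper ideal $I$ of $C_V(U)$, the subspace $U\cdot I$ generates a non-trivial $(U\otimes C_V(U))$-submodule inside the $\bar\Lambda=0$ isotypic component of $V$. Because distinct summands $L_{\wo_{2m}}(n,\bar\Lambda)$ are pairwise non-isomorphic as $U$-modules, this submodule cannot mix with the other isotypic components, and so it extends to a non-trivial ideal of $V$, contradicting simplicity of $V$.

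For the duality $C_V(C_V(U))=U$, take $w\in C_V(C_V(U))$ and decompose it along the isotypic components. Each component must commute with every element of $C_V(U)=M_{\wo_{2m}}(n,0)$. Provided each non-zero multiplicity space $M_{\wo_{2m}}(n,\bar\Lambda)$ is an irreducible $C_V(U)$-module and these modules are pairwise non-isomorphic for different $\bar\Lambda$, a Schur's-lemma argument forces $w\in L_{\wo_{2m}}(n,0)\otimes \C\mathbf{1}\cong U$.

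The principal technical obstacle is precisely this structural input on the multiplicity spaces $M_{\wo_{2m}}(n,\bar\Lambda)$. In the $\sl_n$ setting of \cite{JL2}, this input is supplied by classical $(GL_n,GL_\ell)$-Howe duality on a Fock space; in the present type $D$ situation, the analogous input must be extracted from the fermionic realization of $L_{\wo_{2m}}(1,0)$ recalled in Section 3 (see Theorem \ref{t3.1}) together with the orthogonal version of Howe duality. Once this ingredient is in place, the remainder of the argument closely parallels \cite{JL2}.
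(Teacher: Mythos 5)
The paper gives no argument for this lemma beyond the remark that it is ``quite similar to the proof of Lemma 3.1 and Lemma 3.5 in \cite{JL2}'', so your plan of following that template is exactly the intended route, and your identification of $C_V(U)$ with the multiplicity space $M_{\wo_{2m}}(n,0)$ is the right starting point. However, as written your sketch has two genuine gaps. The first is in the simplicity argument, whose logic is inverted: if $I$ is a nonzero proper ideal of $C_V(U)$, producing ``a non-trivial ideal of $V$'' is not a contradiction, and the $V$-ideal generated by $L_{\wo_{2m}}(n,0)\otimes I$ does \emph{not} stay inside the $\bar\Lambda=0$ isotypic component --- the modes of general elements of $V$ (already of the weight-one elements) move vectors between isotypic components, so ``cannot mix'' fails for the generated ideal. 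The correct argument runs the other way: simplicity of $V$ forces the generated ideal to be all of $V$, and one then projects back onto $L_{\wo_{2m}}(n,0)\otimes M_{\wo_{2m}}(n,0)$ and onto the vacuum of the first tensor factor to conclude $I=C_V(U)$; it is this projection step that uses rationality of $U$ and the pairwise inequivalence of the $L_{\wo_{2m}}(n,\bar\Lambda)$.

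The second and more serious gap is that the double-commutant step rests entirely on the unproved assertion that the $M_{\wo_{2m}}(n,\bar\Lambda)$ are irreducible and pairwise inequivalent $C_V(U)$-modules. This is not a deferrable technicality: without some input of this kind the statement is simply false for a general simple rational subalgebra of a simple VOA. For instance, for $V=V_{\Z\alpha}$ with $(\alpha,\alpha)=2$ and the rational simple full subalgebra $U=V_{\Z 2\alpha}$ one has $C_V(U)=\C{\bf 1}$, the multiplicity space of the nontrivial $U$-isotypic component is isomorphic to $M(0)=\C{\bf 1}$ as a $C_V(U)$-module, and $C_V(C_V(U))=V\neq U$. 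What is actually needed is the (weaker) statement that for $\bar\Lambda\neq 0$ the module $M_{\wo_{2m}}(n,\bar\Lambda)$ contains no vacuum-like vector for $C_V(U)$, equivalently that $M_{\wo_{2m}}(n,0)$ does not embed into $M_{\wo_{2m}}(n,\bar\Lambda)$; once this is known, $C_V(C_V(U))=\bigoplus_{\bar\Lambda}L_{\wo_{2m}}(n,\bar\Lambda)\otimes\Omega(M_{\wo_{2m}}(n,\bar\Lambda))$ collapses to $U$. Your appeal to an orthogonal analogue of Howe duality is a plausible source for this input, but it is not carried out, and it is precisely the mathematical content of the lemma rather than a routine verification; this is the point at which the argument must actually engage with the specific embedding $L_{\wo_{2m}}(n,0)\subseteq L_{\wo_{2m}}(1,0)^{\otimes n}$ (in the paper this is ultimately supplied by the fermionic realization and the conformal-embedding results of \cite{KFPX} used later in Section 4).
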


 \subsubsection{The case $m\geq 4$ and $n=2$} We define the following lattice
 $$
 R_{2m}=\Z x_{1}\oplus \cdots\oplus \Z x_{2m}
 $$
where $(x_{i},x_j)=\delta_{ij},   \ 1\leq i,j\leq 2m.$

  Let $V_{R_{2m}}$ be the associated lattice vertex superalgebra with the following $2$-cocycle $\varepsilon$
  $$
  \varepsilon(x_i, x_j)=\left\{\begin{array}{ll} 1,  & \ {\rm if} \ i\leq j,\\
  -1, & \ {\rm if}   \ i>j.
  \end{array}
  \right.
  $$
Let $$L_1=\bigoplus_{i=1}^{m-1}\Z(x_i\pm x_{i+1})\quad \text{ and }\quad    \ L_2=\bigoplus_{i=1}^{m-1}\Z(x_{m+i}\pm  x_{m+i+1})$$
be two sublattices of $R_{2m}$. Then $L_1\cong L_2\cong D_m$ and
$$
V_{L_1}\cong V_{L_2} \cong L_{\wo_{2m}}(1,0),
$$
and $L_{\wo_{2m}}(2,0)$ can be naturally regarded as a  vertex operator subalgebra of $V_{L_1}\otimes V_{L_2}$. Set
$$
V=V_{L_1}\otimes V_{L_2} \quad \text{ and } \quad  U=L_{\wo_{2m}}(2,0).
$$
Let  $\widetilde{\mathcal{D}}_m$ be the  sublattice of $R_{2m}$ $\Z$-linearly spanned  by $$\{x_i+x_{i+1}+x_{m+i}+x_{m+i+1},
x_i-x_{i+1}+x_{m+i}-x_{m+i+1}\mid 1\leq i\leq m-1\},$$
and let $\mathcal{D}_m$ be the sublattice of $R_{2m}$ $\Z$-linearly spanned  by
$$\{x_i+x_j-x_{m+i} -x_{m+j},
x_i-x_j-x_{m+i}+x_{m+j}\mid 1\leq i<j\leq m\}.$$
Notice that $\widetilde{\mathcal{D}}_m \cong \mathcal{D}_m\cong \sqrt{2}D_m$ as lattices.
By a similar argument as in \cite{LY}, it is easy to verify that
$$
V_{\widetilde{\mathcal{D}}_m}\subseteq U=L_{\wo_{2m}}(2,0).
$$
Moreover, we have the following lemma.
\begin{lem}\label{dual-pair-1}
(1) $(V_{\widetilde{\D}_m}, V_{\D_m})$ is a duality pair in $V$.

(2) $(C_V(U), K(\frak{so}_{2m},2))$ is a duality pair in $V_{\D_m}$, where $K(\frak{so}_{2m},2)$
 is the commutant of  $V_{\widetilde{\D}_m}$ in $L_{\wo_{2m}}(2,0)$, which  is called a parafermion vertex operator algebra.
 \end{lem}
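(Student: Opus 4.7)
For part (1), I would identify $L_1\oplus L_2\cong D_m\oplus D_m$ via $x_i\leftrightarrow(e_i,0)$ and $x_{m+i}\leftrightarrow(0,e_i)$. Under this identification the listed generators show directly that
\[
\sqrt{2}\widetilde{D}_m=\{(\alpha,\alpha):\alpha\in D_m\},\qquad \sqrt{2}D_m=\{(\alpha,-\alpha):\alpha\in D_m\},
\]
so the two sublattices are mutually orthogonal rank-$m$ sublattices of $L_1\oplus L_2$ whose rational spans exhaust $(L_1\oplus L_2)\otimes\Q$ and which satisfy $(\sqrt{2}\widetilde{D}_m)^\perp\cap(L_1\oplus L_2)=\sqrt{2}D_m$ and vice versa. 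The duality pair assertion then follows from the standard lattice commutant theorem $C_{V_L}(V_N)=V_{N^\perp\cap L}$ for a primitive positive-definite sublattice $N\subseteq L$, proved via the coset decomposition of $V$ with respect to $\sqrt{2}\widetilde{D}_m\oplus\sqrt{2}D_m\subseteq L_1\oplus L_2$, exactly as in the argument of \cite{LY}.

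For part (2), the key picture is the see-saw configuration inside $V$ consisting of the two dual pairs $(V_{\sqrt{2}\widetilde{D}_m},V_{\sqrt{2}D_m})$ from part (1) and $(U,C_V(U))$ from Lemma \ref{dual-pair-2}, linked by the inclusion $V_{\sqrt{2}\widetilde{D}_m}\subseteq U$ and — as a consequence of part (1) — by $C_V(U)\subseteq C_V(V_{\sqrt{2}\widetilde{D}_m})=V_{\sqrt{2}D_m}$. Writing $K=K(\frak{so}_{2m},2)=C_U(V_{\sqrt{2}\widetilde{D}_m})$, the inclusions
\[
C_V(U)\subseteq C_{V_{\sqrt{2}D_m}}(K)\qquad\text{and}\qquad K\subseteq C_{V_{\sqrt{2}D_m}}(C_V(U))
\]
are immediate from $K\subseteq U$ together with the nesting above. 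To upgrade these to equalities I would combine three decompositions of $V$: the $(V_{\sqrt{2}\widetilde{D}_m},V_{\sqrt{2}D_m})$-decomposition $V=\bigoplus_\lambda M(\lambda)\otimes N(\lambda)$ from part (1); the $(U,C_V(U))$-decomposition $V=\bigoplus_\mu T(\mu)\otimes S(\mu)$ from Lemma \ref{dual-pair-2}; and the parafermion decomposition $U=\bigoplus_\lambda M(\lambda)\otimes K(\lambda)$ inside $U$, coming from known parafermion theory for $\wo_{2m}$ at level $2$. Matching the $V_{\sqrt{2}\widetilde{D}_m}$-isotypic components of $V$ refines each $N(\lambda)$ into a direct sum of $K\otimes C_V(U)$-modules and, at the vacuum component $\lambda=0$, exhibits $V_{\sqrt{2}D_m}$ as an extension of $K\otimes C_V(U)$ in which $K$ and $C_V(U)$ are mutual commutants.

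The main obstacle will be this upgrade from inclusion to equality. The subtlety is that $U$ is in general a nontrivial simple current extension of $V_{\sqrt{2}\widetilde{D}_m}\otimes K$, so an element of $V$ commuting with each of $V_{\sqrt{2}\widetilde{D}_m}$ and $K$ separately need not a priori commute with the ``off-diagonal'' summands $M(\lambda)\otimes K(\lambda)$ of $U$; equivalently, a see-saw dual pair principle must be verified directly in the VOA setting rather than assumed. Handling this will require the rationality of all the VOAs involved, the known module structure of the parafermion algebra $K(\frak{so}_{2m},2)$ (from results such as \cite{DLWY,DLY2}), and a careful character/multiplicity matching between the $(V_{\sqrt{2}\widetilde{D}_m},V_{\sqrt{2}D_m})$- and $(U,C_V(U))$-decompositions of $V$ to preclude any ``extra'' element in $C_{V_{\sqrt{2}D_m}}(K)$ beyond $C_V(U)$.
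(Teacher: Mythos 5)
Your part (1) is exactly the paper's argument: the two sublattices are mutually orthogonal (indeed each is the full orthogonal complement of the other in $L_1\oplus L_2$), so the lattice VOAs are mutual commutants; the paper states the orthogonality and refers to Lemma 3.2 of \cite{JL2} for the remaining details, which is the coset/commutant computation you describe. For part (2) your see-saw configuration and the easy inclusions are also the paper's setup, and the two inputs you use --- the dual pair $(U,C_V(U))$ from Lemma \ref{dual-pair-2} and the parafermion dual pair $(V_{\sqrt{2}\widetilde{D}_m},K(\frak{so}_{2m},2))$ inside $U=L_{\wo_{2m}}(2,0)$ --- are the same ones the paper invokes (the latter from \cite{DLY2}).

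The gap is the step you yourself flag: upgrading the inclusions to equalities. You leave this as a ``careful character/multiplicity matching,'' which is not an argument. The paper closes it in one stroke by citing the reciprocity law of Theorem 5.4 in \cite{JL2}, which is precisely the general see-saw statement for dual pairs that you say ``must be verified directly''; your proposed multiplicity matching would amount to reproving that theorem. Note also that the specific worry you raise --- that an element commuting with $V_{\sqrt{2}\widetilde{D}_m}$ and with $K(\frak{so}_{2m},2)$ separately might fail to commute with the off-diagonal summands of the extension $U\supseteq V_{\sqrt{2}\widetilde{D}_m}\otimes K(\frak{so}_{2m},2)$ --- can be dispatched without any character computation: since $K(\frak{so}_{2m},2)$ is by definition the commutant of $V_{\sqrt{2}\widetilde{D}_m}$ in $U$, the subalgebra $V_{\sqrt{2}\widetilde{D}_m}\otimes K(\frak{so}_{2m},2)$ carries the same conformal vector as $U$, and the commutant of a subalgebra in $V$ depends only on that conformal vector (this equal-conformal-vector device is used repeatedly in the proof of Theorem \ref{duality-1}). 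Hence $C_V(V_{\sqrt{2}\widetilde{D}_m}\otimes K(\frak{so}_{2m},2))=C_V(U)$, which combined with part (1) gives $C_{V_{\sqrt{2}D_m}}(K(\frak{so}_{2m},2))=C_V(U)$; the other equality follows symmetrically from Lemma \ref{dual-pair-2}. Either cite the reciprocity law or run this conformal-vector argument; as written, your proof is incomplete at exactly this point.
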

\begin{proof}
Notice that $\widetilde{\D}_m$ and $\D_m$ are orthogonal to each other. It means that the associated lattice vertex operator algebras
$V_{\widetilde{\D}_m}$ and $V_{\D_m}$ are commutant to each other. The rest of the  proof of (1) is similar to that of Lemma 3.2 in \cite{JL2}.

By Lemma \ref{dual-pair-2}, we know that $(C_V(U), L_{\wo_{2m}}(2,0))$ is a duality pair in $V$. Notice also that
$(V_{\widetilde{\D}_m}, K(\frak{so}_{2m},2))$ is a duality pair in $L_{\wo_{2m}}(2,0)$ ( see \cite{DLY2} ). Then (2) follows from (1) and the reciprocity law established in Theorem 5.4 of \cite{JL2}.
\end{proof}
Denote
$$
\al^i=x_i-x_{m+i}, \ 1\leq i\leq m.$$
Then $\{\al^1-\al^2, \al^2-\al^3, \cdots, \al^{m-1}-\al^m, \al^{m-1}+\al^{m}\}$ is a basis of $\D_m\cong \sqrt{2}D_m$.
Consider
$$
V_{A_1^m}=V_{\Z\al^1}\otimes V_{\Z\al^2}\otimes \cdots \otimes V_{\Z\al^m}.
$$
Define vertex operator algebra automorphisms of $V_{A_1^m}$ as follows:
$$
\tau: \al^i(-1){\bf 1}\mapsto e^{\al^i}+e^{-\al^i}, \
e^{\al^i}+e^{-\al^i}\mapsto \al^i(-1){\bf 1}, \  e^{\al^i}-e^{-\al^i}
\mapsto -(e^{\al^i}-e^{-\al^i}),$$
$$
u^1\otimes u^2\otimes \cdots \otimes u^m \mapsto \tau(u^1)\otimes \tau(u^2)\otimes \cdots \otimes \tau(u^m);
$$
$$
\theta: \al^i(-1){\bf 1}\mapsto \al^i(-1){\bf 1}, \ e^{\al^i}\mapsto e^{\al^i}, \
e^{-\al^i}\mapsto e^{-\al^i}, \ i \ {\rm is} \ {\rm odd}, $$
$$
\al^i(-1){\bf 1}\mapsto -\al^i(-1){\bf 1}, \  e^{\al^i}\mapsto e^{-\al^i}, \
e^{-\al^i}\mapsto e^{\al^i}, \ i \ {\rm is} \ {\rm even}, $$
$$
u^1\otimes u^2\otimes \cdots \otimes u^m \mapsto \theta(u^1)\otimes \theta(u^2)\otimes \cdots \otimes \theta(u^m);
$$
$$
\sigma: \al^i(-1){\bf 1}\mapsto -\al^i(-1){\bf 1}, \  e^{\al^i}\mapsto e^{-\al^i}, \
e^{-\al^i}\mapsto e^{\al^i}, $$
$$ u^1\otimes u^2\otimes \cdots \otimes u^m \mapsto \sigma(u^1)\otimes \sigma(u^2)\otimes \cdots \otimes \sigma(u^m);
$$
where $u^i\in V_{\Z\al^i}$, $1\leq i\leq m$. Then we have
\begin{lem}[\cite{DLY1}]\label{iso1}
$\theta\tau(V_{\sqrt{2}D_m})=V_{A_1^m}^{\sigma}$.
\end{lem}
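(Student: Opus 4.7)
The plan is to prove the equality $\theta\tau(V_{\sqrt{2}D_m}) = V_{A_1^m}^\sigma$ by combining an index argument with a direct verification of one inclusion on generators.

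First I would check that $\tau$ and $\theta$ preserve the vertex algebra structure on each single-factor $V_{\Z\alpha^i}$; this is a routine but essential verification since the whole argument rests on $\theta\tau$ being a genuine VOA automorphism of $V_{A_1^m}$. Next I would observe that $[A_1^m : \sqrt{2}D_m] = 2$ (indeed $\sqrt{2}D_m$ is the parity sublattice of $A_1^m = \bigoplus_i \Z\alpha^i$, consisting of $\sum n_i\alpha^i$ with $\sum n_i$ even), so $V_{\sqrt{2}D_m}$ is an index-$2$ subalgebra of $V_{A_1^m}$. On the other hand $V_{A_1^m}^\sigma$, being the fixed-point subalgebra of a $\Z_2$-orbifold of $V_{A_1^m}$, is also of index~$2$. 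Because $\theta\tau$ is a VOA automorphism, $\theta\tau(V_{\sqrt{2}D_m})$ has the same graded character as $V_{\sqrt{2}D_m}$, which matches that of $V_{A_1^m}^\sigma$. Hence once the inclusion $\theta\tau(V_{\sqrt{2}D_m}) \subseteq V_{A_1^m}^\sigma$ is verified, equality follows automatically.

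To check this inclusion I would take a generating set for $V_{\sqrt{2}D_m}$ consisting of the Cartan generators $\alpha^i(-1)\mathbf{1}$ and the vectors $e^{\pm(\alpha^i\pm\alpha^j)}$ for $i\neq j$. For the Cartan piece, $\tau(\alpha^i(-1)\mathbf{1}) = e^{\alpha^i}+e^{-\alpha^i}$, which is manifestly $\sigma$-fixed, and $\theta$ only inserts a global sign on the even-indexed factors. For the root vectors, using the decomposition
\[
e^{\pm\alpha^i} \;=\; \tfrac{1}{2}\bigl[(e^{\alpha^i}+e^{-\alpha^i})\pm(e^{\alpha^i}-e^{-\alpha^i})\bigr]
\]
and applying $\tau$ factor by factor, one writes $\tau(e^{\pm(\alpha^i\pm\alpha^j)})$ as a sum of tensor monomials whose entries lie in the spans of $\{\alpha^k(-1)\mathbf{1},\, e^{\alpha^k}-e^{-\alpha^k}\}$. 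These are exactly the $\sigma$-odd eigenvectors in each single factor, so each monomial is a product of two $\sigma$-odd vectors and therefore $\sigma$-invariant. The automorphism $\theta$ then adjusts parity signs on even-indexed tensor slots without disturbing the $\sigma$-eigenvalue analysis.

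The main obstacle will be bookkeeping: tracking the lattice $2$-cocycle $\varepsilon$ when identifying $e^{\alpha^i\pm\alpha^j}\in V_{A_1^m}$ with the corresponding tensor monomial $e^{\pm\alpha^i}\otimes e^{\pm\alpha^j}$ across factors, and simultaneously tracking the parity-dependent sign flips introduced by $\theta$. Once these signs are properly organized, the $\sigma$-invariance of the images of all generators follows uniformly from the observation that each basic summand is a tensor product of $\sigma$-odd vectors whose eigenvalues multiply to $+1$.
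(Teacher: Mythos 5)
The paper itself offers no argument for this lemma; it is quoted directly from \cite{DLY1}, so your proposal has to stand on its own. Your verification of the inclusion $\theta\tau(V_{\sqrt{2}D_m})\subseteq V_{A_1^m}^{\sigma}$ is fine: $\sqrt{2}D_m$ is indeed the even-sum sublattice of $A_1^m=\bigoplus_i\Z\al^i$, the set $\{\al^i(-1){\bf 1}\}\cup\{e^{\pm(\al^i\pm\al^j)}\}$ generates $V_{\sqrt{2}D_m}$, and since $\tau(e^{\pm\al^i})=\frac{1}{2}\bigl(\al^i(-1){\bf 1}\mp(e^{\al^i}-e^{-\al^i})\bigr)$ lies in the $\sigma$-odd part of the $i$-th tensor factor, the image of each $e^{\al^i\pm\al^j}$ is a sum of tensor monomials with exactly two $\sigma$-odd slots and is therefore $\sigma$-fixed; $\theta$ commutes with $\sigma$ and does not disturb this.

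The gap is in how you pass from this inclusion to equality. ``Both subalgebras have index $2$'' does not give equal graded characters: for an order-two automorphism $g$ the graded dimension of $V^{g}$ is $\frac{1}{2}\bigl(\ch V+\tr_{V}\,g\,q^{L(0)-c/24}\bigr)$, which depends on $g$ itself, not merely on its order. Here the two involutions are genuinely different: $V_{\sqrt{2}D_m}=V_{A_1^m}^{\rho}$, where $\rho$ fixes the Heisenberg part and multiplies $e^{\beta}$ by $(-1)^{\sum n_i}$ for $\beta=\sum n_i\al^i$, while $\sigma$ is the lift of $-1$ on the lattice. The assertion that their twisted traces agree in every weight is true, but it is essentially equivalent to the lemma you are proving, so invoking ``the characters match'' at this point is circular. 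The clean repair is to upgrade your generator computation by one step: check that $(\theta\tau)\,\rho\,(\theta\tau)^{-1}$ and $\sigma$ agree on the weight-one generators $\al^i(-1){\bf 1},\,e^{\pm\al^i}$ of each factor $V_{\Z\al^i}\cong L_{\widehat{\sl}_2}(1,0)$ --- a short explicit computation using the same formulas you already wrote down. Since these vectors generate $V_{A_1^m}$, the two automorphisms coincide, and then
$$
\theta\tau(V_{\sqrt{2}D_m})=\theta\tau\bigl(V_{A_1^m}^{\rho}\bigr)=V_{A_1^m}^{(\theta\tau)\rho(\theta\tau)^{-1}}=V_{A_1^m}^{\sigma}
$$
follows at once, with no character or index argument needed and with the reverse inclusion obtained for free.
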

It can be checked directly that, for $1\leq i\leq m-1$,
\begin{equation}\label{4eq2}
\theta\tau[\frac{1}{4}(\al^i-\al^{i+1})(-1)^2{\bf 1}+(e^{\al^i-\al^{i+1}}+e^{-\al^i+\al^{i+1}})]= \frac{1}{4}(\al^i-\al^{i+1})(-1)^2{\bf 1}-(e^{\al^i-\al^{i+1}}+e^{-\al^i+\al^{i+1}}),
\end{equation}
\begin{equation}\label{4eq3}
 \theta\tau[\frac{1}{4}(\al^i+\al^{i+1})(-1)^2{\bf 1}+(e^{\al^i+\al^{i+1}}+e^{-\al^i-\al^{i+1}})]
=  \frac{1}{4}(\al^i-\al^{i+1})(-1)^2{\bf 1}+(e^{\al^i-\al^{i+1}}+e^{-\al^i+\al^{i+1}}).
\end{equation}
The following Lemma comes from \cite {La}, \cite{JL2}, \cite{LS}, and \cite{JL1}.
\begin{lem}\label{dual-pair-3}
(1) $(L_{\widehat{\frak{sl}}_2}(m,0), K(\frak{sl}_m,2))$ is a duality pair in $V_{A_1^m}$.

(2) $K(\frak{sl}_m,2)$ is  generated by
$$
\omega^{ij}=\frac{1}{16}(\al^i-\al^{i+1})(-1)^2{\bf 1}-\frac{1}{4}(e^{\al^i-\al^{i+1}}+e^{-\al^i+\al^{i+1}}),
\ 1\leq i<j\leq m.$$

(3) $K(\frak{sl}_m,2)$ is rational.
\qed
\end{lem}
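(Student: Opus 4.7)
The plan is to treat the three parts separately, reducing each to a result already available in the literature cited (\cite{La}, \cite{JL2}, \cite{LS}, \cite{JL1}) and to the equalities (\ref{4eq2})--(\ref{4eq3}) that were verified right above the lemma.

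For (1), I would start by recognizing $V_{A_1^m}$ as the $m$-fold tensor product of the rank-one lattice VOA $V_{A_1}\cong L_{\widehat{\frak{sl}}_2}(1,0)$ via the identification $\alpha^i\leftrightarrow$ the simple root of the $i$-th copy. Under this identification the subalgebra generated by the diagonal $\frak{sl}_2$-action is exactly $L_{\widehat{\frak{sl}}_2}(m,0)$. Consequently the duality statement
$$C_{V_{A_1^m}}\bigl(L_{\widehat{\frak{sl}}_2}(m,0)\bigr)\cong K(\frak{sl}_m,2)$$
is precisely the $(n,l)=(2,m)$ specialization of the level-rank duality $C_{L_{\widehat{\frak{sl}}_n}(1,0)^{\otimes l}}(L_{\widehat{\frak{sl}}_n}(l,0))\cong K(\frak{sl}_l,n)$ proved independently in \cite{La} and \cite{JL2}. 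That $(L_{\widehat{\frak{sl}}_2}(m,0),K(\frak{sl}_m,2))$ is actually a \emph{duality pair} (i.e., the double commutant is the original algebra) follows from the reciprocity law of \cite[Thm.~5.4]{JL2}, exactly as in the argument used for Lemma \ref{dual-pair-1}(2) above.

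For (2), the first observation is that each $\omega^{ij}$ is an Ising conformal vector of central charge $1/2$ inside the rank-one lattice VOA $V_{\Z(\alpha^i-\alpha^{i+1})}$ (the vector $(\alpha^i-\alpha^{i+1})/2$ has square-norm $1$, so the standard formula $\tfrac1{16}\beta(-1)^2\mathbf 1-\tfrac14(e^\beta+e^{-\beta})$ with $\beta=\alpha^i-\alpha^{i+1}$ produces a Virasoro vector of central charge $1/2$). To show $\omega^{ij}\in K(\frak{sl}_m,2)$, I would use Lemma \ref{iso1} together with (\ref{4eq2})--(\ref{4eq3}): the map $\theta\tau$ intertwines the sub-VOAs, and the identities show that each $\omega^{ij}$ lies in $\theta\tau(V_{\sqrt 2 D_m})=V_{A_1^m}^{\sigma}$, and moreover inside the parafermion piece (not the Heisenberg or $L_{\widehat{\frak{sl}}_2}(m,0)$ part). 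Once this is verified, the fact that the $\omega^{ij}$ exhaust a generating set for $K(\frak{sl}_m,2)$ is then the Ising-vector generation result proved in \cite{LS} and \cite{JL1} for level-two parafermion algebras of type $A$. I expect this step, i.e., matching the explicit Ising vectors written here with the generators used in \cite{LS,JL1}, to be the main technical obstacle, since different conventions for cocycles and for the isomorphism $V_{\sqrt 2 D_m}\cong V_{A_1^m}^{\sigma}$ must be reconciled.

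For (3), rationality of $K(\frak{sl}_m,2)$ follows from the general rationality results for parafermion vertex operator algebras of type $A$ established in the literature (and used already in \cite{JL2,La}); one may also derive it from (2) and the known rationality of the tensor product of Ising models together with the extension under $V_{A_1^m}^{\sigma}$, since (1) presents $K(\frak{sl}_m,2)$ as the commutant of a rational subalgebra inside the rational VOA $V_{A_1^m}$, to which Miyamoto's theorems on $C_2$-cofiniteness and rationality of commutants apply once $C_2$-cofiniteness of $K(\frak{sl}_m,2)$ is in hand. I would simply invoke the cited references rather than reprove this.
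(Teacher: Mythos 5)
Your proposal is correct and takes essentially the same route as the paper, which offers no independent argument for this lemma but simply attributes it to \cite{La}, \cite{JL2}, \cite{LS} and \cite{JL1}; your assignment of part (1) to the level--rank duality of \cite{La,JL2} (with the double-commutant property via the reciprocity law), part (2) to the Ising-vector generation results of \cite{LS,JL1}, and part (3) to the known rationality of type-$A$ parafermion algebras matches the intended provenance. The only superfluous step is your appeal to Lemma \ref{iso1} and (\ref{4eq2})--(\ref{4eq3}) in part (2): each $\omega^{ij}$ is manifestly fixed by $\sigma$ directly from the definition of $\sigma$, so no detour through $V_{\sqrt{2}D_m}$ is needed there.
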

By (2) of Lemma \ref{dual-pair-3}, we have
\begin{equation}\label{4eq1}
  K(\frak{sl}_m,2)\subseteq V_{A_1^m}^{\sigma}.
  \end{equation}

Then by (1) of Lemma \ref{dual-pair-3}, we have the following lemma.
\begin{lem}\label{dual-pair-4}
$
(K(\frak{sl}_m,2), L_{\widehat{\frak{sl}}_2}(m,0)^{\sigma})
$ is a duality pair in $V_{A_1^m}^{\sigma}$.
\end{lem}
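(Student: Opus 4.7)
The plan is to establish both commutant identities defining a duality pair in $V^\sigma := V_{A_1^m}^\sigma$. Write $L = L_{\widehat{\sl}_2}(m,0)$ and $K = K(\sl_m,2)$ for brevity. The first identity, $C_{V^\sigma}(K) = L^\sigma$, is immediate by intersection: Lemma \ref{dual-pair-3}(1) gives $C_{V_{A_1^m}}(K) = L$, and combined with (\ref{4eq1}) this yields
\[
C_{V^\sigma}(K) = C_{V_{A_1^m}}(K) \cap V^\sigma = L \cap V^\sigma = L^\sigma.
\]

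For the second identity $C_{V^\sigma}(L^\sigma) = K$, the inclusion $\supseteq$ is clear since $K = C_{V_{A_1^m}}(L) \subseteq C_{V_{A_1^m}}(L^\sigma)$ and $K \subseteq V^\sigma$. For the reverse, I plan to prove the stronger assertion $C_{V_{A_1^m}}(L^\sigma) \subseteq K$. The key input is that the Sugawara Virasoro element of $L$,
\[
\omega_L = \tfrac{1}{2(m+2)}\bigl(e_{-1}f + f_{-1}e + \tfrac{1}{2}h_{-1}h\bigr),
\]
is $\sigma$-fixed: on the generators of the diagonal $\sl_2$ the automorphism $\sigma$ acts by $e \leftrightarrow f$ and $h \mapsto -h$, which permutes or preserves each summand above. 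Hence $\omega_L \in L^\sigma$, so any $v \in C_{V_{A_1^m}}(L^\sigma)$ is annihilated by $(\omega_L)_1 = L^L(0)$, i.e.\ has $L$-conformal weight zero.

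To conclude I will invoke the Howe-type decomposition supplied by Lemma \ref{dual-pair-3}(1),
\[
V_{A_1^m} = \bigoplus_{k=0}^{m} L(k) \otimes M(k),
\]
where $L(k)$ are the irreducible level-$m$ $\widehat{\sl}_2$-modules and $M(k)$ are pairwise non-isomorphic irreducible $K$-modules. The operator $L^L(0)$ acts as $L^L(0)\big|_{L(k)} \otimes \mathrm{id}$ on the $k$-th summand, with spectrum contained in $h_k + \Z_{\geq 0}$, where $h_k = k(k+2)/[4(m+2)]$ is the Sugawara top weight. Since $h_k > 0$ for every $1 \leq k \leq m$, the $L^L(0)$-zero-eigenspace of $V_{A_1^m}$ is confined to the $k=0$ summand, where it coincides with $\C\mathbf{1} \otimes M(0) = K$. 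Thus $C_{V_{A_1^m}}(L^\sigma) \subseteq K$, and intersecting with $V^\sigma$ (which contains $K$) finishes the argument.

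The main obstacle is modest and concentrated in the third paragraph: one must check $\sigma\omega_L = \omega_L$ and identify the $L^L(0)$-weight-zero subspace of $V_{A_1^m}$ with $K$ using the strict positivity of the $h_k$ for $k \geq 1$ and the multiplicity-freeness of the decomposition. Everything else reduces to formal intersection of commutants.
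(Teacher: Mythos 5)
Your proof is correct, and it supplies an argument that the paper essentially omits: the printed justification for this lemma is only the citation of Lemma \ref{dual-pair-3}(1) together with the containment $K(\frak{sl}_m,2)\subseteq V_{A_1^m}^{\sigma}$ from (\ref{4eq1}), which immediately gives the easy identity $C_{V_{A_1^m}^{\sigma}}(K(\frak{sl}_m,2))=L_{\widehat{\frak{sl}}_2}(m,0)\cap V_{A_1^m}^{\sigma}=L_{\widehat{\frak{sl}}_2}(m,0)^{\sigma}$ but leaves the harder inclusion $C_{V_{A_1^m}^{\sigma}}(L_{\widehat{\frak{sl}}_2}(m,0)^{\sigma})\subseteq K(\frak{sl}_m,2)$ unaddressed. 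The mechanism the authors implicitly rely on (and invoke explicitly several times in the proof of Theorem \ref{duality-1}) is that $L_{\widehat{\frak{sl}}_2}(m,0)$ and $L_{\widehat{\frak{sl}}_2}(m,0)^{\sigma}$ have the same conformal vector, so their commutants in any ambient vertex operator algebra coincide by the Frenkel--Zhu characterization of the commutant as the kernel of $L'(-1)$ (equivalently of $L'(0)$ under the standard hypotheses). Your third paragraph proves exactly this coincidence by hand: you check $\sigma\omega_L=\omega_L$ (which also follows at once because $\sigma$ acts on the diagonal $\frak{sl}_2$ as a Lie algebra automorphism, so it preserves the Casimir and hence the Sugawara vector), and you identify $\ker L^{L}(0)$ with $\C{\bf 1}\otimes M(0)=K(\frak{sl}_m,2)$ using the positivity of the Sugawara weights $h_k=k(k+2)/[4(m+2)]$ for $1\leq k\leq m$. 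This is a legitimate, self-contained substitute for the general lemma; the only small remark is that the multiplicity-freeness of the decomposition, which you mention, is not actually needed --- all that matters is that the $L^{L}(0)$-spectrum on each summand $L(k)\otimes M(k)$ with $k\geq 1$ is bounded below by $h_k>0$ and that the weight-zero subspace of $L(0)=L_{\widehat{\frak{sl}}_2}(m,0)$ is $\C{\bf 1}$.
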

Let
$$
\gamma=\al^1+\al^2+\cdots+\al^m.
$$
We are now in a position to state the following level-rank duality.
\begin{theorem}\label{duality-1}
(1) $C_{L_{\wo_{2m}}(1,0)^{\otimes 2}}(L_{\wo_{2m}}(2,0)) \cong V_{\Z\gamma}^{\sigma}$.

(2) $C_{L_{\wo_{2m}}(1,0)^{\otimes 2}}(L_{\wo_{2m}}(2,0))\cong C_{ L_{\widehat{\frak{sl}}_2}(m,0)^{\sigma}}(K(\frak{sl}_2,m)^{\sigma})$.

(3) $C_{K(\frak{so}_{2m}, 2)}(K(\frak{sl}_{m},2))\cong K(\frak{sl}_2,m)^{\sigma}$.
\end{theorem}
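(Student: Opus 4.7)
My plan is to prove the three parts in a linked fashion, using the VOA isomorphism $\theta\tau:V_{\sqrt{2}D_m}\xrightarrow{\cong}V_{A_1^m}^{\sigma}$ from Lemma \ref{iso1}, the duality pairs in Lemmas \ref{dual-pair-1}(2) and \ref{dual-pair-4}, and the identities \eqref{4eq2}--\eqref{4eq3}. The natural order is to establish (3) first, then deduce (2), and finally obtain (1) from (2) via the parafermion--Heisenberg structure of $L_{\widehat{\frak{sl}}_2}(m,0)$.

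For (3), the identities \eqref{4eq2}--\eqref{4eq3} give $\omega^{i,i+1}=\tfrac{1}{4}\theta\tau(A)$, where $A=\tfrac{1}{4}(\al^i-\al^{i+1})(-1)^2{\bf 1}+e^{\al^i-\al^{i+1}}+e^{-\al^i+\al^{i+1}}\in V_{\sqrt{2}D_m}$. Since $V_{\sqrt{2}D_m}=C_V(V_{\sqrt{2}\widetilde{D}_m})$, the element $\tfrac{1}{4}A$ automatically commutes with $V_{\sqrt{2}\widetilde{D}_m}$; a direct Sugawara/fermionic computation then places $\tfrac{1}{4}A$ (and the analogous elements indexed by the other pairs of roots of $\sqrt{2}D_m$) inside $L_{\wo_{2m}}(2,0)$, hence inside $K(\frak{so}_{2m},2)$. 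This yields $(\theta\tau)^{-1}(K(\frak{sl}_m,2))\subseteq K(\frak{so}_{2m},2)$, and central-charge bookkeeping gives $c\bigl(C_{K(\frak{so}_{2m},2)}((\theta\tau)^{-1}K(\frak{sl}_m,2))\bigr)=(m-1)-\tfrac{m(m-1)}{m+2}=\tfrac{2(m-1)}{m+2}=c(K(\frak{sl}_2,m))$. To identify this commutant with $K(\frak{sl}_2,m)^{\sigma}$, transport back to $V_{A_1^m}^{\sigma}$: by Lemma \ref{dual-pair-4} the image lies in $L_{\widehat{\frak{sl}}_2}(m,0)^{\sigma}$, inside which the $\sigma$-fixed parafermion $K(\frak{sl}_2,m)^{\sigma}$ is the unique subalgebra of the correct central charge commuting with the remaining generators of $\theta\tau(K(\frak{so}_{2m},2))$.

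For (2), Lemma \ref{dual-pair-1}(2) gives the dual pair $(C_V(U),K(\frak{so}_{2m},2))$ in $V_{\sqrt{2}D_m}$, which $\theta\tau$ transports into $V_{A_1^m}^{\sigma}$. By (3), $\theta\tau(K(\frak{so}_{2m},2))$ is a conformal extension of $K(\frak{sl}_m,2)\otimes K(\frak{sl}_2,m)^{\sigma}$, so $\theta\tau(C_V(U))$ commutes with both factors. By Lemma \ref{dual-pair-4} one has $C_{V_{A_1^m}^{\sigma}}(K(\frak{sl}_m,2))=L_{\widehat{\frak{sl}}_2}(m,0)^{\sigma}$, whence $\theta\tau(C_V(U))\subseteq L_{\widehat{\frak{sl}}_2}(m,0)^{\sigma}\cap C_{V_{A_1^m}^{\sigma}}(K(\frak{sl}_2,m)^{\sigma})=C_{L_{\widehat{\frak{sl}}_2}(m,0)^{\sigma}}(K(\frak{sl}_2,m)^{\sigma})$. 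Both sides have central charge $1$, and the reciprocity law of Theorem~5.4 in \cite{JL2}, applied to the dual pair $\bigl(\theta\tau(K(\frak{so}_{2m},2)),\theta\tau(C_V(U))\bigr)$ in $V_{A_1^m}^{\sigma}$, upgrades this inclusion to equality, proving (2).

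For (1), inside $L_{\widehat{\frak{sl}}_2}(m,0)$ the Cartan Heisenberg extends to the lattice vertex operator algebra $V_{\Z\gamma}$ (since the $\gamma$-charges take all integer values) and $(V_{\Z\gamma},K(\frak{sl}_2,m))$ is a dual pair there. Since $\sigma$ acts as $-1$ on $V_{\Z\gamma}$ and preserves $K(\frak{sl}_2,m)$, analyzing the $\sigma$-refined $V_{\Z\gamma}\otimes K(\frak{sl}_2,m)$-module decomposition of $L_{\widehat{\frak{sl}}_2}(m,0)$ (noting that $\sigma$ acts within the untwisted summand and permutes opposite-charge summands) yields $C_{L_{\widehat{\frak{sl}}_2}(m,0)^{\sigma}}(K(\frak{sl}_2,m)^{\sigma})=V_{\Z\gamma}^{\sigma}$; combined with (2) this proves (1). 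The main obstacle is Step~(3): verifying $\tfrac{1}{4}A\in L_{\wo_{2m}}(2,0)$ (it a priori lies only in the ambient $V$, not in the affine subalgebra) and pinning down the conformal extension $K(\frak{sl}_m,2)\otimes K(\frak{sl}_2,m)^{\sigma}\hookrightarrow K(\frak{so}_{2m},2)$ precisely enough to invoke the reciprocity law used in Step~(2).
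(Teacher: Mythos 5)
Your proposal runs on the same circle of ideas as the paper (the isomorphism $\theta\tau(V_{\sqrt{2}D_m})=V_{A_1^m}^{\sigma}$, the dual pairs of Lemmas \ref{dual-pair-1} and \ref{dual-pair-4}, the identities \eqref{4eq2}--\eqref{4eq3}, and central-charge/conformal-vector comparisons), but you reverse the logical order, proving (3) first and deducing (2) and (1) from it, and this reversal is where the argument breaks. To prove (3) you need the inclusion $K(\frak{sl}_2,m)^{\sigma}\subseteq \theta\tau(K(\frak{so}_{2m},2))$ (equivalently, that $\theta\tau(K(\frak{so}_{2m},2))$ is a conformal extension of $K(\frak{sl}_m,2)\otimes K(\frak{sl}_2,m)^{\sigma}$), and you justify this only by matching central charges plus the assertion that $K(\frak{sl}_2,m)^{\sigma}$ is ``the unique subalgebra of the correct central charge'' commuting with the other generators. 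That uniqueness is not established and is not true in general: a proper subalgebra sharing the conformal vector (e.g.\ an orbifold) has the same central charge, so central-charge bookkeeping cannot identify the commutant. In the paper this inclusion is a \emph{consequence} of (1) and (2): once $\theta\tau(C_V(U))=V_{\Z\gamma}^{\sigma}$ is known, the dual-pair property of $(\theta\tau(K(\frak{so}_{2m},2)),\theta\tau(C_V(U)))$ in $V_{A_1^m}^{\sigma}$ forces $K(\frak{sl}_m,2)\otimes K(\frak{sl}_2,m)^{\sigma}=C(V_{\Z\gamma}^{\sigma}\otimes\cdots)\subseteq\theta\tau(K(\frak{so}_{2m},2))$, and then (3) follows by sandwiching the commutant between $K(\frak{sl}_2,m)^{\sigma}$ and $C_{L_{\widehat{\frak{sl}}_2}(m,0)^{\sigma}}(V_{\Z\gamma}^{\sigma})=K(\frak{sl}_2,m)^{\sigma}$. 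The same defect recurs in your step (2): ``both sides have central charge $1$'' plus an unspecified appeal to reciprocity does not upgrade an inclusion of vertex operator algebras with a common conformal vector to an equality.

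The tool that actually closes these gaps, and which your proposal never isolates, is that a commutant $C_W(A)$ depends only on the \emph{conformal vector} of $A$, not on $A$ itself. The paper verifies directly that $\theta\tau(K(\frak{so}_{2m},2))$ and $K(\frak{sl}_m,2)\otimes K(\frak{sl}_2,m)^{\sigma}$ have the same conformal vector, whence
$\theta\tau(C_V(U))=C_{V_{A_1^m}^{\sigma}}(\theta\tau(K(\frak{so}_{2m},2)))
=C_{V_{A_1^m}^{\sigma}}(K(\frak{sl}_m,2)\otimes K(\frak{sl}_2,m)^{\sigma})=V_{\Z\gamma}^{\sigma}$
by \eqref{4eq4}; this gives (1) and (2) in one stroke and then feeds into (3) as above. (The same principle also shortens your argument for (1): since $\sigma$ fixes the conformal vector of $K(\frak{sl}_2,m)$, one has $C_{L_{\widehat{\frak{sl}}_2}(m,0)^{\sigma}}(K(\frak{sl}_2,m)^{\sigma})=C_{L_{\widehat{\frak{sl}}_2}(m,0)^{\sigma}}(K(\frak{sl}_2,m))=V_{\Z\gamma}^{\sigma}$ without the module-by-module analysis of the $\sigma$-refined decomposition.) I would recommend either reordering to the paper's sequence (1),(2),(3), or, if you want to keep (3) first, supplying an independent proof of the conformal extension $K(\frak{sl}_m,2)\otimes K(\frak{sl}_2,m)^{\sigma}\subseteq\theta\tau(K(\frak{so}_{2m},2))$ rather than inferring it from central charges.
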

\begin{proof}
By (1) of Lemma \ref{dual-pair-3}, $(L_{\widehat{\frak{sl}}_2}(m,0), K(\frak{sl}_m,2))$ is a duality pair in $V_{A_1^m}$. Since $(V_{\Z\gamma}, K(\frak{sl}_2,m))$ is a duality pair in $L_{\widehat{\frak{sl}}_2}(m,0)$, we deduce that
$$
C_{V_{A_1^m}}(K(\frak{sl}_{m},2)\otimes K(\frak{sl}_{2},m))=V_{\Z\gamma}.$$
By (\ref{4eq1}) and the fact that $K(\frak{sl}_{m},2)\otimes K(\frak{sl}_{2},m)$ and $K(\frak{sl}_{m},2)\otimes K(\frak{sl}_{2},m)^{\sigma}$ have the same conformal vector, we have
\begin{equation}\label{4eq4}
C_{V_{A_1^m}^{\sigma}}(K(\frak{sl}_{m},2)\otimes K(\frak{sl}_{2},m)^{\sigma})=V_{\Z\gamma}^{\sigma}.
\end{equation}
By Lemma \ref{iso1},  $\theta\tau(V_{\sqrt{2}D_m})=V_{A_1^m}^{\sigma}$. Notice  that $K(\frak{so}_{2m},2)$ is generated by $$
\frac{1}{4}(\al^i-\al^{i+1})(-1)^2{\bf 1}+(e^{\al^i-\al^{i+1}}+e^{-\al^i+\al^{i+1}}), \
\frac{1}{4}(\al^i+\al^{i+1})(-1)^2{\bf 1}+(e^{\al^i+\al^{i+1}}+e^{-\al^i-\al^{i+1}}),
$$
where $ 1\leq i\leq m-1$. By (\ref{4eq2}) and (\ref{4eq3}),
$$\theta\tau(K(\frak{so}_{2m},2))\subseteq C_{V_{A_1^m}^{\sigma}}(V_{\Z\gamma}^{\sigma}).$$
Then by (2) of Lemma \ref{dual-pair-1}, we have
\begin{equation}\label{4eq5}
V_{\Z\gamma}^{\sigma}\subseteq C_{V_{A_1^m}^{\sigma}}(\theta\tau(K(\frak{so}_{2m},2)))
=\theta\tau(C_{V}(U)).
\end{equation}
On the other hand, it can be checked directly that $\theta\tau(K(\frak{so}_{2m},2))$ and $K(\frak{sl}_{m},2)\otimes K(\frak{sl}_{2},m)^{\sigma}$ have the same conformal vector. Then by (\ref{4eq4}), we have
$$\theta\tau(C_{V}(U))\subseteq V_{\Z\gamma}^{\sigma}.$$
Together with (\ref{4eq5}), we deduce that
$$\theta\tau(C_{V}(U))= V_{\Z\gamma}^{\sigma},$$
proving (1) and (2). Since $(\theta\tau(K(\frak{so}_{2m},2)), \theta\tau(C_{V}(U)))$ is a duality pair in $V_{A_1^m}^{\sigma}$, we have
$$
K(\frak{sl}_{m},2)\otimes K(\frak{sl}_{2},m)^{\sigma}\subseteq \theta\tau(K(\frak{so}_{2m},2)).$$
Then
$$
 K(\frak{sl}_{2},m)^{\sigma}\subseteq C_{\theta\tau(K(\frak{so}_{2m}, 2))}(K(\frak{sl}_{m},2)).$$
 Notice that
 $$
 C_{L_{\widehat{\frak{sl}}_2}(m,0)^{\sigma}}(V_{\Z\gamma}^{\sigma})=K(\frak{sl}_2,m)^{\sigma}
  $$
 and $$C_{V_{A_1^m}^{\sigma}}(K(\frak{sl}_{m},2))=L_{\widehat{\frak{sl}}_2}(m,0)^{\sigma}.$$
 So
 $$C_{\theta\tau(K(\frak{so}_{2m}, 2))}(K(\frak{sl}_{m},2))\subseteq L_{\widehat{\frak{sl}}_2}(m,0)^{\sigma}.
 $$
 Since
 $
 C_{\theta\tau(K(\frak{so}_{2m}, 2))}(K(\frak{sl}_{m},2))$ commutes with  $V_{\Z\gamma}^{\sigma}$,  it  follows that
 $$
  C_{\theta\tau(K(\frak{so}_{2m}, 2))}(K(\frak{sl}_{m},2)) =K(\frak{sl}_2,m)^{\sigma}.$$
Therefore (3) holds.
\end{proof}
\begin{remark}
For the proof of (1) in  Theorem \ref{duality-1}, one can also refer to \cite{AP}.
\end{remark}

\subsubsection{The case $m\in\Z_{\geq 2}$ and $n\in\Z_{\geq 3}$}
Let ${\mathcal Cl}_{2mn}$ be the Clifford algebra generated by $\psi^{\pm}_{ij}(r)$, $1\leq i\leq m, 1\leq j\leq n$, $r\in\Z+\frac{1}{2}$, with the non-trivial relations
\begin{equation}\label{4eq5_1}
[\psi_{ij}^{\pm}(r),\psi_{kl}^{\mp}(s)]_+
=\psi_{ij}^{\pm}(r)\psi_{kl}^{\mp}(s)+\psi_{kl}^{\mp}(s)\psi_{ij}^{\pm}(r) =\delta_{ik}\delta_{jl}\delta_{r+s,0},
\end{equation}
where $1\leq i,k\leq m, 1\leq j,l\leq n$, $r,s\in\Z+\frac{1}{2}$.
 Let ${\mathcal F}_{2mn}$ be the irreducible ${\mathcal Cl}_{2mn}$-module generated by the cyclic vector ${\bf 1}$ such that
$$
\psi^{\pm}_{ij}(r){\bf 1}=0, \ {\rm for} \ r>0, \ 1\leq i\leq m, \ 1\leq j\leq n.
$$
Then by Theorem \ref{t3.1}, ${\mathcal F}^{even}_{2mn}\cong L_{\wo_{2mn}}(1,0)$. Obviously ${\mathcal F}^{even}_{2mn}$
is generated by
$$
\psi^{\pm}_{ki}(-\frac{1}{2})\psi^{\pm}_{lj}(-\frac{1}{2}){\bf 1}, \  \  \psi^{\pm}_{ki}(-\frac{1}{2})\psi^{\mp}_{lj}(-\frac{1}{2}){\bf 1}, \ 1\leq k,l\leq m, \ 1\leq i,j\leq n.$$

 Define the  fields on ${\mathcal F}_{2mn}$ as follows
$$
\psi_{ij}^{\pm}(z)=\sum\limits_{r\in\Z}\psi^{\pm}_{ij}(r+\frac{1}{2})z^{-r-1}.
$$
 For $1\leq j\leq n$, let ${\mathcal F}_{2m,j}$ (resp. ${\mathcal F}_{i,n}$, for $1\leq i\leq m$) be the  subalgebra of ${\mathcal F}_{2mn}$ generated by the fields $\psi^{\pm}_{ij}(z)$, $1\leq i\leq m$ (resp. $\psi^{\pm}_{ij}(z)$, $1\leq j\leq n$).
Then ${\mathcal F}_{2m,j}^{even}\cong L_{\wo_{2m}}(1,0)$.  We denote ${\mathcal F}_{2m,j}^{even}$ by
$L^{(j)}_{\wo_{2m}}(1,0)$. Then we have
$$
V=L_{\wo_{2m}}^{(1)}(1,0)\otimes L_{\wo_{2m}}^{(2)}(1,0)\otimes \cdots\otimes L_{\wo_{2m}}^{(n)}(1,0)=L_{\wo_{2m}}(1,0)^{\otimes n}\subseteq L_{\wo_{2mn}}(1,0).$$
It is obvious that   $L_{\wo_{2m}}(n,0)$ can be diagonally embedded into $V$ as a vertex operator subalgebra of $V$, and
$$
\sum\limits_{j=1}^{n}\psi^{\pm}_{kj}(-\frac{1}{2})\psi^{\pm}_{lj}(-\frac{1}{2}){\bf 1}, \  \  \sum\limits_{j=1}^{n}\psi^{\pm}_{kj}(-\frac{1}{2})\psi^{\mp}_{lj}(-\frac{1}{2}){\bf 1}, \ 1\leq k,l\leq m$$
 are generators of $L_{\wo_{2m}}(n,0)$. We denote this diagonally embedded vertex operator subalgebra of $V$ by $U$ and $U\cong L_{\wo_{2m}}(n,0)$.
Specifically, let
$$
\alpha^{\vee}_{kj}=\psi^{+}_{kj}(-\frac{1}{2})\psi^{-}_{kj}(-\frac{1}{2}){\bf 1}-\psi^{+}_{k+1,j}(-\frac{1}{2})\psi^{-}_{k+1,j}(-\frac{1}{2}){\bf 1},$$
$$
e_{kj}=\sqrt{-1}\psi^{+}_{kj}(-\frac{1}{2})\psi^{-}_{k+1,j}(-\frac{1}{2}){\bf 1}, \
f_{kj}=\sqrt{-1}\psi^{-}_{kj}(-\frac{1}{2})\psi^{+}_{k+1,j}(-\frac{1}{2}){\bf 1},$$
for $1\leq k\leq m-1$ and
$$
\alpha^{\vee}_{mj}=\psi^{+}_{m-1,j}(-\frac{1}{2})\psi^{-}_{m-1,j}(-\frac{1}{2}){\bf 1}+\psi^{+}_{mj}(-\frac{1}{2})\psi^{-}_{mj}(-\frac{1}{2}){\bf 1},$$
$$
e_{mj}=\sqrt{-1}\psi^{+}_{m-1,j}(-\frac{1}{2})\psi^{+}_{mj}(-\frac{1}{2}){\bf 1}, \
f_{mj}=\sqrt{-1}\psi^{-}_{m-1,j}(-\frac{1}{2})\psi^{-}_{mj}(-\frac{1}{2}){\bf 1}.$$
Then it can be checked directly that $\{\alpha_{kj}^{\vee}, e_{kj},f_{kj}, 1\leq k\leq m\}$ are Chevalley generators of the simple Lie algebra $\frak{so}_{2m}(\C)$.

Set
$$
\psi_{kj}(-\frac{1}{2}){\bf 1}=\frac{1}{\sqrt{2}}(\psi^{+}_{kj}(-\frac{1}{2}){\bf 1}+\psi^{-}_{kj}(-\frac{1}{2}){\bf 1}),$$
$$
\psi_{m+k,j}(-\frac{1}{2}){\bf 1}=\frac{-\sqrt{-1}}{\sqrt{2}}(\psi^{+}_{kj}(-\frac{1}{2}){\bf 1}-\psi^{-}_{kj}(-\frac{1}{2}){\bf 1}),
$$
for $1\leq k\leq m$, $1\leq j\leq n$. Then
$$
\psi^{+}_{kj}(-\frac{1}{2}){\bf 1}=\frac{1}{\sqrt{2}}(\psi_{kj}(-\frac{1}{2}){\bf 1}+\sqrt{-1}\psi_{m+k,j}(-\frac{1}{2}){\bf 1}),$$
$$
\psi^{-}_{kj}(-\frac{1}{2}){\bf 1}=\frac{1}{\sqrt{2}}(\psi_{kj}(-\frac{1}{2}){\bf 1}
-\sqrt{-1}\psi_{m+k,j}(-\frac{1}{2}){\bf 1}),$$
and
\begin{equation}\label{4eq6}
[\psi_{ki}(r),\psi_{lj}(s)]_+=\delta_{ij}\delta_{kl}\delta_{r+s,0}.
\end{equation}
It is easy to see that for $1\leq k\leq m-1$,
$$
\alpha^{\vee}_{kj}=\sqrt{-1}(\psi_{m+k,j}(-\frac{1}{2})\psi_{kj}(-\frac{1}{2}){\bf 1}-\psi_{m+k+1,j}(-\frac{1}{2})\psi_{k+1,j}(-\frac{1}{2}){\bf 1}),$$
$$
\begin{array}{ll}
e_{kj}=& \dfrac{1}{2}[(\psi_{kj}(-\frac{1}{2})\psi_{m+k+1,j}(-\frac{1}{2}){\bf 1}+\psi_{k+1,j}(-\frac{1}{2})\psi_{m+k,j}(-\frac{1}{2}){\bf 1})\\
& +\sqrt{-1}(\psi_{kj}(-\frac{1}{2})\psi_{k+1,j}(-\frac{1}{2}){\bf 1}+\psi_{m+k,j}(-\frac{1}{2})\psi_{m+k+1,j}(-\frac{1}{2}){\bf 1})],
\end{array}
$$$$
\begin{array}{ll}
f_{kj}=& \dfrac{1}{2}[-(\psi_{kj}(-\frac{1}{2})\psi_{m+k+1,j}(-\frac{1}{2}){\bf 1}+\psi_{k+1,j}(-\frac{1}{2})\psi_{m+k,j}(-\frac{1}{2}){\bf 1})\\
& +\sqrt{-1}(\psi_{kj}(-\frac{1}{2})\psi_{k+1,j}(-\frac{1}{2}){\bf 1}+\psi_{m+k,j}(-\frac{1}{2})\psi_{m+k+1,j}(-\frac{1}{2}){\bf 1})],
\end{array}$$
 and
$$
\alpha^{\vee}_{mj}=\sqrt{-1}(\psi_{2m-1,j}(-\frac{1}{2})\psi_{m-1,j}(-\frac{1}{2}){\bf 1}+\psi_{2m,j}(-\frac{1}{2})\psi_{mj}(-\frac{1}{2}){\bf 1}),$$
$$
\begin{array}{ll}
e_{mj}=& \dfrac{1}{2}[(-\psi_{m-1,j}(-\frac{1}{2})\psi_{2m,j}(-\frac{1}{2}){\bf 1}+\psi_{mj}(-\frac{1}{2})\psi_{2m-1,j}(-\frac{1}{2}){\bf 1})\\
& +\sqrt{-1}(\psi_{m-1,j}(-\frac{1}{2})\psi_{mj}(-\frac{1}{2}){\bf 1}-\psi_{2m-1,j}(-\frac{1}{2})\psi_{2m,j}(-\frac{1}{2}){\bf 1})],
\end{array}
$$$$
\begin{array}{ll}
f_{mj}=& \dfrac{1}{2}[(\psi_{m-1,j}(-\frac{1}{2})\psi_{2m,j}(-\frac{1}{2}){\bf 1}-\psi_{mj}(-\frac{1}{2})\psi_{2m-1,j}(-\frac{1}{2}){\bf 1})\\
& +\sqrt{-1}(\psi_{m-1,j}(-\frac{1}{2})\psi_{mj}(-\frac{1}{2}){\bf 1}-\psi_{2m-1,j}(-\frac{1}{2})\psi_{2m,j}(-\frac{1}{2}){\bf 1})].
\end{array}$$

It is also easy to check that ( see also \cite{FKRW} )
$$
[Y(\sum\limits_{k=1}^{2m}\psi_{ki}(-\frac{1}{2})\psi_{kj}(-\frac{1}{2}){\bf 1},z),
Y(\sum\limits_{l=1}^{n}\psi_{rl}(-\frac{1}{2})\psi_{sl}(-\frac{1}{2}){\bf 1},w)]=0,
$$
 for $1\leq i,j\leq n$, $1\leq r,s\leq 2m$.  Moreover, the vertex operator subalgebra of $L_{\wo_{2mn}}(1,0)$ generated by $\sum\limits_{k=1}^{2m}\psi_{ki}(-\frac{1}{2})\psi_{kj}(-\frac{1}{2}){\bf 1}$, $1\leq i,j\leq n$,  is isomorphic to $L_{\wo_{n}}(2m,0)$.  We denote this vertex operator subalgebra by $X\ (\cong L_{\wo_{n}}(2m,0))$.

 Denote by $C_{L_{\wo_{2mn}}(1,0)}(L_{\wo_{2m}}(n,0))$ the  commutant of $U=L_{\wo_{2m}}(n,0)$ in $L_{\wo_{2mn}}(1,0)$.  Then we have the following lemma.
  \begin{lem}\label{commutant4}
 $X  \subseteq  C_{L_{\wo_{2mn}}(1,0)}(L_{\wo_{2m}}(n,0))$.
 \end{lem}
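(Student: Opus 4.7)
The plan is to reduce the inclusion to a commutator identity between explicit generators, and then to verify that identity by a Wick-type calculation in the free fermion Fock space $\mathcal{F}_{2mn}$. Since the commutant of a vertex subalgebra is determined by annihilation against any generating set, it suffices to show that every generator of $L_{\wo_{n}}(2m,0)$ lies in the commutant of a chosen generating set of $L_{\wo_{2m}}(n,0)$ inside $L_{\wo_{2mn}}(1,0)$.

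First I would fix the generators. From the paragraph preceding the lemma, $L_{\wo_{n}}(2m,0)$ is the vertex subalgebra generated by $E_{ij} := \sum_{k=1}^{2m}\psi_{ki}(-\tfrac12)\psi_{kj}(-\tfrac12)\mathbf{1}$ for $1\leq i,j\leq n$, i.e.\ by ``row-sum'' quadratic fermions. The diagonal $L_{\wo_{2m}}(n,0)$ has Chevalley generators $\alpha^\vee_{kj}, e_{kj}, f_{kj}$ displayed in the excerpt, and after rewriting them in the real basis $\{\psi_{rl}\}$ they appear as $\mathbb{C}$-linear combinations of ``column-sum'' bilinears $F_{rs} := \sum_{l=1}^{n}\psi_{rl}(-\tfrac12)\psi_{sl}(-\tfrac12)\mathbf{1}$ with $1\leq r\ne s\leq 2m$. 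Thus everything reduces to the single identity
\[
[Y(E_{ij},z),\,Y(F_{rs},w)] = 0,
\]
which is precisely the commutation relation asserted (but not verified) just before the lemma.

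Next I would verify this identity by Wick's theorem applied to $:\psi_{ki}\psi_{kj}:(z)$ and $:\psi_{rl}\psi_{sl}:(w)$, using the single-pole OPE
\[
\psi_{ai}(z)\,\psi_{bj}(w) \sim \frac{\delta_{ab}\,\delta_{ij}}{z-w},
\]
which follows directly from (\ref{4eq6}). The OPE between the two bilinears produces a double-contraction (scalar) term of order $(z-w)^{-2}$ and four single-contraction normal-ordered terms of order $(z-w)^{-1}$. The crucial feature is that the summation indices of $E_{ij}$ and $F_{rs}$ run over \emph{transverse} directions ($k$ over the $2m$-index, $l$ over the $n$-index): a nontrivial contraction between $\psi_{k,\cdot}$ and $\psi_{\cdot,l}$ forces $k\in\{r,s\}$ and $l\in\{i,j\}$, freezing both summation indices simultaneously. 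After imposing these matchings, the two possible pairings contributing to each order produce opposite signs—arising from the fermionic antisymmetry $:\psi_{ap}\psi_{bq}: = -:\psi_{bq}\psi_{ap}:$ together with the antisymmetry $F_{rs} = -F_{sr}$—and cancel in pairs.

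The main obstacle is the sign bookkeeping in the Wick expansion; this is routine but tedious, and a cleaner route is to interpret the computation as the vertex operator incarnation of the classical Howe dual pair $(\wo_{2m},\wo_n)$ acting on $\mathcal{F}_{2mn}$: the commuting action of the two orthogonal Lie algebras on the fermionic Fock space is exactly the zero-mode statement of the identity above, and since the currents $Y(E_{ij},z)$ and $Y(F_{rs},w)$ are quadratic in free fermions with transverse index supports, no anomaly (central extension mixing the two algebras) can appear in the higher-order poles. This yields $L_{\wo_{n}}(2m,0)\subseteq C_{L_{\wo_{2mn}}(1,0)}(L_{\wo_{2m}}(n,0))$ as required.
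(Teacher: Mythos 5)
Your proposal is correct and takes essentially the same approach as the paper: there the lemma is an immediate consequence of the displayed identity $[Y(\sum_{k}\psi_{ki}(-\frac{1}{2})\psi_{kj}(-\frac{1}{2}){\bf 1},z),\,Y(\sum_{l}\psi_{rl}(-\frac{1}{2})\psi_{sl}(-\frac{1}{2}){\bf 1},w)]=0$ (asserted as ``easy to check, see also [FKRW]'') together with the identification of the row-sum and column-sum bilinears as generators of the two subalgebras, and your Wick-theorem computation just supplies the verification the paper leaves to the reader. The only cosmetic point is that the order-$(z-w)^{-2}$ double-contraction terms vanish individually (each forces $r=s$ and $i=j$, contradicting $r\neq s$) rather than by a sign cancellation in pairs.
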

 Recall that  the vertex operator algebra $L_{\wo_{2mn}}(1,0)$ is generated by $\psi_{kj}(-\frac{1}{2})\psi_{rs}(-\frac{1}{2}){\bf 1}$, $1\leq k,r\leq 2m$, $1\leq j,s\leq n$.  For $1\leq i\leq n$, we define the vertex operator algebra automorphism $\sigma_i$ of $L_{\wo_{2mn}}(1,0)$ by
\begin{equation}\label{aeq3}
\sigma_{i}(\psi_{kj}(-\frac{1}{2})\psi_{rs}(-\frac{1}{2}){\bf 1})=(-1)^{\delta_{ij}+\delta_{is}}\psi_{kj}(-\frac{1}{2})\psi_{rs}(-\frac{1}{2}){\bf 1}.
\end{equation}
Then $$\sigma_{i}^2={\rm id},$$
and
for
$1\leq j\leq n$,  $u=\psi_{k_1i_1}(-m_1-\frac{1}{2})\cdots \psi_{k_{2r}i_{2r}}(-m_{2r}-\frac{1}{2}){\bf 1}\in L_{\wo_{2mn}}(1,0)$, where $1\leq k_1,\cdots,k_{2r}\leq 2m$, $1\leq i_1,\cdots,i_{2r}\leq n$, $m_{1}, \cdots,m_{2r}\in\N$,
$$
\sigma_ju=(-1)^{\delta_{j{i_1}}+\cdots+\delta_{ji_{2r}}}u.
$$

Denote by $G$ the automorphism
group of $L_{\wo_{2mn}}(1,0)$ generated by $\{\sigma_{i}$, $1\leq i\leq n\}$.  It is obvious that $G$ is an abelian group. Set
$$
L_{\wo_{2mn}}(1,0)^{G}=\{v\in L_{\wo_{2mn}}(1,0) \mid  g(v)=v, \ g\in G\}.$$
Let $V=L_{\wo_{2m}}^{(1)}(1,0)\otimes L_{\wo_{2m}}^{(2)}(1,0)\otimes \cdots\otimes L_{\wo_{2m}}^{(n)}(1,0)\cong L_{\wo_{2m}}(1,0)^{\otimes n}$ and $U=L_{\wo_{2m}}(n,0)$ be defined as above. We have the following lemma.
\begin{lem}\label{auto1} $U=L_{\wo_{2m}}(n,0)\subseteq L_{\wo_{2mn}}(1,0)^{G}$ and
$$L_{\wo_{2mn}}(1,0)^{G}=V=L_{\wo_{2m}}^{(1)}(1,0)\otimes L_{\wo_{2m}}^{(2)}(1,0)\otimes \cdots\otimes L_{\wo_{2m}}^{(n)}(1,0). $$
\end{lem}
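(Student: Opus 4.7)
The plan is to verify the first inclusion directly from the explicit generators of the diagonal embedding, and then establish the equality $L_{\wo_{2mn}}(1,0)^{G}=V$ by analyzing how $G$ acts on a natural tensor product decomposition of the underlying fermionic Fock space.

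For the first inclusion, recall that $L_{\wo_{2m}}(n,0)\subset L_{\wo_{2mn}}(1,0)$ is generated by the diagonal sums $\sum_{j=1}^{n}\psi^{\pm}_{kj}(-\frac{1}{2})\psi^{\pm}_{lj}(-\frac{1}{2}){\bf 1}$ and $\sum_{j=1}^{n}\psi^{\pm}_{kj}(-\frac{1}{2})\psi^{\mp}_{lj}(-\frac{1}{2}){\bf 1}$. In each summand the two fermionic modes share the same second index $j$, hence by the definition of $\sigma_{i}$ the scalar $(-1)^{\delta_{ij}+\delta_{ij}}=1$ is picked up. Thus every generator, and consequently all of $L_{\wo_{2m}}(n,0)$, lies in $L_{\wo_{2mn}}(1,0)^{G}$.

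For the description of the fixed subalgebra, I would first observe that each factor $L_{\wo_{2m}}^{(j)}(1,0)=\mathcal{F}_{2m,j}^{even}$ is generated by elements $\psi^{\pm}_{kj}(-\frac{1}{2})\psi^{\pm}_{lj}(-\frac{1}{2}){\bf 1}$ and $\psi^{\pm}_{kj}(-\frac{1}{2})\psi^{\mp}_{lj}(-\frac{1}{2}){\bf 1}$, in which both second indices equal $j$, so the same computation as above shows that these generators (and thus $V$) are $G$-fixed. For the reverse inclusion, use the canonical identification $\mathcal{F}_{2mn}\cong \mathcal{F}_{2m,1}\otimes\cdots\otimes\mathcal{F}_{2m,n}$ of Clifford modules, coming from the factorization of the Clifford algebra according to the second index. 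Under this identification, $\sigma_{i}$ acts as the parity operator on the $i$-th tensor factor and as the identity on the other factors. Choose a PBW-style basis of $\mathcal{F}_{2mn}$ consisting of vectors of the form $M_{1}M_{2}\cdots M_{n}{\bf 1}$, where $M_{j}$ is an ordered monomial in the creation operators $\psi^{\pm}_{k,j}(-r-\frac{1}{2})$ with second index $j$. Then $\sigma_{i}$ multiplies such a vector by $(-1)^{\deg M_{i}}$, where $\deg M_{i}$ is the number of fermionic factors in $M_{i}$. Demanding invariance under every $\sigma_{i}$ forces each $\deg M_{j}$ to be even separately, which is precisely the condition that each $M_{j}{\bf 1}$ lies in $\mathcal{F}_{2m,j}^{even}$. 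Summing over the PBW basis gives $L_{\wo_{2mn}}(1,0)^{G}\subseteq \bigotimes_{j=1}^{n}\mathcal{F}_{2m,j}^{even}=V$.

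The only delicate point is that the even part $\mathcal{F}_{2mn}^{even}$ of the full Clifford module is strictly larger than the graded tensor product $\bigotimes_{j}\mathcal{F}_{2m,j}^{even}$, since the former only requires the total fermion number to be even. What the group $G$ buys is exactly the refinement that cuts each factor's parity down individually, and this is precisely the content of the separate constraints $\deg M_{i}\equiv 0\pmod{2}$ for each $i$. Once this bookkeeping with the $\mathbb{Z}_{2}$-grading on each Clifford tensor factor is handled, the proof is complete.
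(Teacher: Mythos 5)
Your proof is correct, and it is essentially the argument the paper has in mind: the paper itself gives no proof of this lemma (it is stated as ``easy to check''), and the intended verification is exactly your observation that $\sigma_i$ is induced by negating all fermions $\psi_{ki}(r)$ in the $i$-th column, so that on a PBW monomial it acts by $(-1)^{\deg M_i}$ and the simultaneous fixed points are precisely $\bigotimes_{j}\mathcal{F}_{2m,j}^{even}=V$, while the diagonal generators of $L_{\wo_{2m}}(n,0)$ pick up the sign $(-1)^{2\delta_{ij}}=1$. Your closing remark correctly identifies the only genuine content of the statement, namely that $G$-invariance refines the single total-parity condition defining $\mathcal{F}_{2mn}^{even}$ to a separate parity condition in each tensor factor.
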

\begin{proof}
Recall that for $1\leq j\leq n$, $L^{(j)}_{\wo_{2m}}(1,0)$ is generated by $\psi_{kj}(-\frac{1}{2})\psi_{lj}(-\frac{1}{2}){\bf 1}$, $1\leq k,l\leq 2m$.  By the definition of $\sigma_i$, for any $1\leq i\leq n$
$$
\sigma_i(\psi_{kj}(-\frac{1}{2})\psi_{lj}(-\frac{1}{2}){\bf 1})=\psi_{kj}(-\frac{1}{2})\psi_{lj}(-\frac{1}{2}){\bf 1},
$$
So
$L_{\wo_{2m}}^{(j)}(1,0)\subseteq L_{\wo_{2mn}}(1,0)^{G}$. This implies that $V\subseteq L_{\wo_{2mn}}(1,0)^{G}$. In general, by the definition of $\sigma_j$, $1\leq j\leq n$, we have
$$
\sigma_ju=(-1)^{\delta_{j{i_1}}+\cdots+\delta_{ji_{2r}}}u,
$$
for
$u=\psi_{k_1i_1}(-m_1-\frac{1}{2})\cdots \psi_{k_{2r}i_{2r}}(-m_{2r}-\frac{1}{2}){\bf 1}\in L_{\wo_{2mn}}(1,0)$, where $1\leq k_1,\cdots,k_{2r}\leq 2m$, $1\leq i_1,\cdots,i_{2r}\leq n$, $m_{1}, \cdots,m_{2r}\in\N$.
Then for each $j$, $\sigma_ju=u$  if and only if the number of $i_k$'s which $i_k=j$ is even. This means that $u\in V$.
\end{proof}

 \subsubsection{The case $ n=2N$ and $N\geq 2$} In this subsection,  we assume that $m\in\Z_{\geq 2}$, $n=2N\in 2\Z_{+}$ and $N\geq 2$.  Set
 $$
\beta^{\vee}_{k}=\sqrt{-1}\sum\limits_{i=1}^{2m}(\psi_{i,N+k}(-\frac{1}{2})\psi_{ik}(-\frac{1}{2}){\bf 1}-\psi_{i,N+k+1}(-\frac{1}{2})\psi_{i,k+1}(-\frac{1}{2}){\bf 1}),$$
$$
\begin{array}{ll}
e_{k}=& \dfrac{1}{2}\sum\limits_{i=1}^{2m}[(\psi_{ik}(-\frac{1}{2})\psi_{i,N+k+1}(-\frac{1}{2}){\bf 1}+\psi_{i,k+1}(-\frac{1}{2})\psi_{i,N+k}(-\frac{1}{2}){\bf 1})\\
& +\sqrt{-1}(\psi_{ik}(-\frac{1}{2})\psi_{i,k+1}(-\frac{1}{2}){\bf 1}+\psi_{i,N+k}(-\frac{1}{2})\psi_{i,N+k+1}(-\frac{1}{2}){\bf 1})],
\end{array}
$$$$
\begin{array}{ll}
f_{k}=& \dfrac{1}{2}\sum\limits_{i=1}^{2m}[-(\psi_{ik}(-\frac{1}{2})\psi_{i,N+k+1}(-\frac{1}{2}){\bf 1}+\psi_{i,k+1}(-\frac{1}{2})\psi_{i,N+k}(-\frac{1}{2}){\bf 1})\\
& +\sqrt{-1}(\psi_{ik}(-\frac{1}{2})\psi_{i,k+1}(-\frac{1}{2}){\bf 1}+\psi_{i,N+k}(-\frac{1}{2})\psi_{i,N+k+1}(-\frac{1}{2}){\bf 1})],
\end{array}$$
for $1\leq k\leq N-1$ and
$$
\beta^{\vee}_{N}=\sqrt{-1}\sum\limits_{i=1}^{2m}(\psi_{i,2N-1}(-\frac{1}{2})\psi_{i,N-1}(-\frac{1}{2}){\bf 1}+\psi_{i,2N}(-\frac{1}{2})\psi_{iN}(-\frac{1}{2}){\bf 1}),$$
$$
\begin{array}{ll}
e_{N}=& \dfrac{1}{2}\sum\limits_{i=1}^{2m}[(-\psi_{i,N-1}(-\frac{1}{2})\psi_{i,2N}(-\frac{1}{2}){\bf 1}+\psi_{iN}(-\frac{1}{2})\psi_{i,2N-1}(-\frac{1}{2}){\bf 1})\\
& +\sqrt{-1}(\psi_{i,N-1}(-\frac{1}{2})\psi_{iN}(-\frac{1}{2}){\bf 1}-\psi_{i,2N-1}(-\frac{1}{2})\psi_{i,2N}(-\frac{1}{2}){\bf 1})],
\end{array}
$$$$
\begin{array}{ll}
f_{N}=& \dfrac{1}{2}\sum\limits_{i=1}^{2m}[(\psi_{i,N-1}(-\frac{1}{2})\psi_{i,2N}(-\frac{1}{2}){\bf 1}-\psi_{iN}(-\frac{1}{2})\psi_{i,2N-1}(-\frac{1}{2}){\bf 1})\\
& +\sqrt{-1}(\psi_{i,N-1}(-\frac{1}{2})\psi_{iN}(-\frac{1}{2}){\bf 1}-\psi_{i,2N-1}(-\frac{1}{2})\psi_{i,2N}(-\frac{1}{2}){\bf 1})].
\end{array}$$
Then $\{\beta_{k}^{\vee}, e_{k},f_{k}, 1\leq k\leq N\}$ are Chevalley generators of $\frak{so}_{n}$ and generate the vertex operator subalgebra $X \cong L_{\wo_{n}}(2m,0)$. For $1\leq i<j\leq n$, set
\begin{equation}\label{aeq1}
u^{(ij)}=\prod_{k=1}^{2m}(\psi_{ki}(-\frac{1}{2})+\sqrt{-1}\psi_{kj}(-\frac{1}{2})){\bf 1},
\end{equation}
\begin{equation}\label{aeq2}
v^{(ij)}=\prod_{k=1}^{2m}(\psi_{ki}(-\frac{1}{2})-\sqrt{-1}\psi_{kj}(-\frac{1}{2})){\bf 1}.
\end{equation}
The following lemma can be checked by a direct calculation.
\begin{lem}\label{commutant1}
 For $1\leq i<j\leq n$,
$$
u^{(ij)}, v^{(ij)}\in C_{L_{\wo_{2mn}}(1,0)}(L_{\wo_{2m}}(n,0)).$$
Furthermore, for $1\leq k\leq N$, $r\in\Z_{+}$, we have
$$
\beta_{k}^{\vee}(r)u^{(ij)}=e_k(r)u^{(ij)}=f_k(r)u^{(ij)}=0,  $$$$
\beta_{k}^{\vee}(r)v^{(ij)}=e_k(r)v^{(ij)}=f_k(r)v^{(ij)}=0, $$ and if $N\geq 3$,
$$
\beta_1^{\vee}(0)u^{(1,N+1)}=2mu^{(1,N+1)}, \  \beta_s^{\vee}(0)u^{(1,N+1)}=0, \
\ e_k(0)u^{(1,N+1)}=0,
$$
$$
\beta_1^{\vee}(0)v^{(1,N+1)}=-2mv^{(1,N+1)},  \ \beta_s^{\vee}(0)v^{(1,N+1)}=0,
\ f_k(0)v^{(1,N+1)}=0,
$$
where  $2\leq s\leq N$, $1\leq k\leq N$.

 If $N=2$, we have
$$
\beta_i^{\vee}(0)u^{(1,3)}=2mu^{(1,3)}, \
\ e_i(0)u^{(1,3)}=0,
$$
$$
\beta_i^{\vee}(0)v^{(1,3)}=-2mv^{(1,3)},  \
\ f_i(0)v^{(1,3)}=0,
$$
where $i=1,2$.
\end{lem}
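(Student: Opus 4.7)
The plan is to verify the lemma by direct computation in the Clifford algebra. The key observation is that if one sets $\phi_k^{\pm}(r):=\psi_{ki}(r)\pm\sqrt{-1}\,\psi_{kj}(r)$ for $1\leq k\leq 2m$ and $r\in\Z+\frac{1}{2}$, then (\ref{4eq6}) gives
$$[\phi_k^{\epsilon}(r),\phi_l^{\epsilon}(s)]_+=0,\qquad [\phi_k^+(r),\phi_l^-(s)]_+=2\delta_{kl}\delta_{r+s,0},\qquad \epsilon\in\{+,-\}.$$
Thus the $\phi_k^+(-\frac{1}{2})$'s (resp.\ $\phi_k^-(-\frac{1}{2})$'s) span a maximal isotropic Clifford subspace, and $u^{(ij)}=\phi_1^+(-\frac{1}{2})\cdots\phi_{2m}^+(-\frac{1}{2}){\bf 1}$ and $v^{(ij)}=\phi_1^-(-\frac{1}{2})\cdots\phi_{2m}^-(-\frac{1}{2}){\bf 1}$ exhaust all isotropic creation modes on the vacuum; in particular any further insertion of $\phi_a^+(-\frac{1}{2})$ into $u^{(ij)}$ (resp.\ $\phi_a^-(-\frac{1}{2})$ into $v^{(ij)}$) vanishes by isotropy.

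To prove the containment $u^{(ij)},v^{(ij)}\in C_{L_{\wo_{2mn}}(1,0)}(L_{\wo_{2m}}(n,0))$, I will use that $L_{\wo_{2m}}(n,0)$ is strongly generated by the weight-one vectors $X_{ab}:=\sum_{p=1}^n\psi_{ap}(-\frac{1}{2})\psi_{bp}(-\frac{1}{2}){\bf 1}$ for $1\leq a,b\leq 2m$ (obtained from the listed $\psi^{\pm}$-generators by a linear change of basis), so it suffices to show $(X_{ab})_r u^{(ij)}=(X_{ab})_r v^{(ij)}=0$ for every $r\geq 0$. A short Wick computation yields the OPE
$$Y(X_{ab},z)\,\phi_k^{\pm}(w)\,\sim\,\frac{\delta_{bk}\,\phi_a^{\pm}(w)-\delta_{ak}\,\phi_b^{\pm}(w)}{z-w},$$
where only the $p=i$ and $p=j$ summands contribute nontrivially and their combined effect reassembles into $\phi^{\pm}$. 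Since the OPE has only a first-order pole and $(X_{ab})_r{\bf 1}=0$ for $r\geq 0$, for $r\geq 1$ the commutator $[(X_{ab})_r,\phi_l^{\pm}(-\frac{1}{2})]$ produces $\phi^{\pm}_a$ or $\phi^{\pm}_b$ with positive mode $r-\frac{1}{2}$, which annihilates ${\bf 1}$ after being anticommuted through the remaining factors. For $r=0$ the derivation rule replaces one factor $\phi_l^{\pm}(-\frac{1}{2})$ by $\phi_a^{\pm}(-\frac{1}{2})$ or $\phi_b^{\pm}(-\frac{1}{2})$, and the resulting product contains some $\phi_c^{\pm}(-\frac{1}{2})$ twice, hence vanishes.

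Part (ii) is entirely parallel: $\beta_k^{\vee}$, $e_k$ and $f_k$ have the same quadratic-fermionic shape, now summed over the first index $a=1,\ldots,2m$ with second indices drawn from $\{k,k+1,N+k,N+k+1\}$ (or $\{N-1,N,2N-1,2N\}$ when $k=N$). The OPE of each such generator $G$ with $\phi_l^{\pm}(w)$ again has only a first-order pole, so for $r\geq 1$ the same vacuum-annihilation argument gives $G_r u^{(ij)}=G_r v^{(ij)}=0$. The zero-mode values on $u^{(1,N+1)}$ and $v^{(1,N+1)}$ will be computed directly from (\ref{4eq6}): one verifies that $\phi_k^{\pm}(-\frac{1}{2}){\bf 1}$ is an eigenvector of $\beta_1^{\vee}(0)$ with eigenvalue $\pm 1$ (this Cartan element measures the rotation charge in the $(\psi_{\cdot,1},\psi_{\cdot,N+1})$-plane), while for $s\geq 2$ the second indices of $\beta_s^{\vee}$ are disjoint from $\{1,N+1\}$, so $\beta_s^{\vee}(0)$ commutes past every factor of $u^{(1,N+1)}$ and annihilates ${\bf 1}$; for $e_k(0)$ and $f_k(0)$ the OPE produces insertions that duplicate an already-present $\phi^{\pm}$-factor, again vanishing by isotropy. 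Applying the derivation rule to the $2m$-fold product yields the stated eigenvalues $\pm 2m$.

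The main obstacle I anticipate is the sign bookkeeping intrinsic to the fermionic Wick expansions; once the OPE formula $Y(X,z)\phi_k^{\pm}(w)\sim(z-w)^{-1}\cdot(\text{linear combination of }\phi^{\pm})$ is established, every remaining vanishing statement collapses either to the isotropy identity $\phi_c^{\pm}(-\frac{1}{2})^2=0$ or to the fact that positive Laurent modes of the $\psi_{a,p}$'s annihilate ${\bf 1}$.
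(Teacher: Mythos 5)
The paper gives no argument beyond asserting that the lemma ``can be checked by a direct calculation,'' and your proposal is exactly such a calculation, correctly organized around the isotropic modes $\phi_k^{\pm}(r)=\psi_{ki}(r)\pm\sqrt{-1}\,\psi_{kj}(r)$, the first-order-pole OPEs of the quadratic generators, and the vanishing of positive modes on ${\bf 1}$ together with $\phi_c^{\pm}(-\frac{1}{2})^2=0$. One small imprecision: for $e_k(0)u^{(1,N+1)}=0$ the operative mechanism is not a duplicated factor but that the singular part of $Y(e_k,z)\phi_l^{+}(w)$ vanishes outright (the $\sqrt{-1}$'s cancel for $k=1$, and for $k\geq 2$ the second indices are disjoint from $\{1,N+1\}$, at least when $N\geq 3$); carrying out the Wick computation you outline would reveal this, and the conclusion is unaffected.
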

If $N\geq 3$, let $\Lambda_i$, $i=0,1,\cdots,N,$ be  the fundamental weights of the affine Lie algebra $\wo_{n}$. Notice that $L_{\wo_{2mn}}(1,0)$ is an integrable module of $X= L_{\wo_{n}}(2m,0)$. Therefore,   the $L_{\wo_{n}}(2m,0)$-submodule of $L_{\wo_{2mn}}(1,0)$ generated by $u^{(1,N+1)}$ is irreducible and is isomorphic to $L_{\wo_{n}}(2m,2m\Lambda_1)$ by Lemma \ref {commutant1}. We simply denote  the $L_{\wo_{n}}(2m,0)$-submodule generated by $u^{(1,N+1)}$ by $L_{\wo_{n}}(2m,2m\Lambda_1)$.

If $N=2$, note that $\frak{so}_{4}(\C)\cong \frak{sl}_2(\C)\oplus \frak{sl}_2(\C)$. By Lemma \ref{commutant1}, $u^{(1,3)}$ generates an irreducible module of $L_{\widehat{\frak{sl}_2}}(2m,0)\otimes L_{\widehat{\frak{sl}_2}}(2m,0)$ isomorphic to $L_{\widehat{\frak{sl}_2}}(2m,2m)\otimes L_{\widehat{\frak{sl}_2}}(2m,2m)$.

Notice that $U\otimes X \cong L_{\wo_{2m}}(n,0)\otimes L_{\wo_{n}}(2m,0)$ is a full vertex operator subalgebra of $L_{\wo_{2mn}}(1,0)$. From now on, we simply identify $U\otimes X$ with $L_{\wo_{2m}}(n,0)\otimes L_{\wo_{n}}(2m,0)$.    The following lemma comes from \cite{KFPX}.
  \begin{lem}\label{add1}
 (1) If $m\equiv N\equiv 1  \ {\rm mod} \  2$, the simple  vertex operator algebras $W$ such that $L_{\wo_{2m}}(n,0)\otimes L_{\wo_{n}}(2m,0)\subseteq W\subseteq L_{\wo_{2mn}}(1,0)$ are in one to one correspondence with the subgroups of $\Z/2\Z\times \Z/2\Z$.

 (2) The simple vertex operator subalgebras $W$ such that $L_{\wo_{2m}}(n,0)\otimes L_{\wo_{n}}(2m,0)\subseteq W \subseteq L_{\wo_{2mn}}(1,0)$ are in one to one correspondence with the subgroups of $(\Z/2\Z)^3$ if $m\equiv N\equiv 0 \ {\rm mod} \ 2$, and  the subgroups of $\Z/2\Z\times \Z/4\Z$ if   $m\equiv 0 \ {\rm mod} \ 2$ and  $N\equiv 1 \ {\rm mod} \ 2$.
  \end{lem}
  We have the following lemma.
\begin{lem}\label{commutant2} For $m\in\Z_{\geq 2}$ and $n=2N\geq 4$, we have
$$C_{L_{\wo_{2mn}}(1,0)}(L_{\wo_{2m}}(n,0))=L_{\wo_{n}}(2m,0)\oplus L_{\wo_{n}}(2m,2m\Lambda_1),$$
where  we mean $L_{\widehat{\frak{sl}_2}}(m,m)\otimes L_{\widehat{\frak{sl}_2}}(m,m)$ by $L_{\wo_{n}}(m,m\Lambda_1)$ if $n=4$.
\end{lem}
\begin{proof} By Lemma \ref{commutant1} and Lemma  \ref{add1}, we just need to  prove the lemma for the case  $N\geq 3$. By  Lemma \ref{commutant4} and Lemma \ref{commutant1}, we have
$$
L_{\wo_{n}}(2m,0)\oplus L_{\wo_{n}}(2m,2m\Lambda_1)\subseteq C_{L_{\wo_{2mn}}(1,0)}(L_{\wo_{2m}}(n,0)).$$
If $m\equiv N\equiv 1  \ {\rm mod} \  2$, by (1) of Lemma \ref{add1}, the simple  vertex operator algebras $W$ such that $L_{\wo_{2m}}(n,0)\otimes L_{\wo_{n}}(2m,0)\subseteq W\subseteq L_{\wo_{2mn}}(1,0)$ are in one to one correspondence with the subgroups of $\Z/2\Z\times \Z/2\Z$. Then we can deduce that $$C_{L_{\wo_{2mn}}(1,0)}(L_{\wo_{2m}}(n,0))=L_{\wo_{n}}(2m,0)\oplus L_{\wo_{n}}(2m,2m\Lambda_1).$$
If $m\equiv 0 \ {\rm mod} \  2$ or $N\equiv 0 \ {\rm mod} \ 2$, set
$$
w^1=\prod_{i=1}^{m}\prod_{k=1}^{N}[\sqrt{-1}(\psi_{ik}(-\frac{1}{2}){\bf 1}-\sqrt{-1}\psi_{i,N+k}(-\frac{1}{2}){\bf 1})+(\psi_{m+i,k}(-\frac{1}{2}){\bf 1}-\sqrt{-1}\psi_{m+i,N+k}(-\frac{1}{2}){\bf 1})],$$
$$
\begin{array}{ll}
w^2=& \prod_{i=1}^{m}(\prod_{k=1}^{N-1}((\psi_{ik}(-\frac{1}{2}){\bf 1}+\sqrt{-1}\psi_{i,N+k}(-\frac{1}{2}){\bf 1})+\sqrt{-1}(\psi_{m+i,k}(-\frac{1}{2}){\bf 1}+\sqrt{-1}\psi_{m+i,N+k}(-\frac{1}{2}){\bf 1})))\\
& \cdot ((\psi_{iN}(-\frac{1}{2}){\bf 1}-\sqrt{-1}\psi_{i,2N}(-\frac{1}{2}){\bf 1})+\sqrt{-1}(\psi_{m+i,N}(-\frac{1}{2}){\bf 1}-\sqrt{-1}\psi_{m+i,2N}(-\frac{1}{2}){\bf 1})).
\end{array}
$$
Then it can be checked directly that
$$
\sum\limits_{j=1}^{n}\alpha_{kj}^{\vee}(r)w^i=\sum\limits_{j=1}^{n}e_{kj}(r)w^i=\sum\limits_{j=1}^{n}f_{kj}(r)w^i=0,
$$
$$
\beta_{s}^{\vee}(r)w^i=e_{s}(r)w^i=f_{s}(r)w^i=0,
$$
where $i=1,2$, $r\in\Z_{+}$, $1\leq k\leq m$, $1\leq s\leq N$.  Furthermore, we have
$$
\sum\limits_{j=1}^{n}\alpha_{lj}^{\vee}(0)w^1=0,  \ \sum\limits_{j=1}^{n}\alpha_{mj}^{\vee}(0)w^1
=-nw^1, \  \sum\limits_{j=1}^{n}f_{kj}(0)w^1=0, \ 1\leq k\leq m, \  \ 1\leq l\leq m-1,$$
$$
\beta_{s}^{\vee}(0)w^1=0, \ \beta_{N}^{\vee}(0)w^1=-2mw^1,  f_k(0)w^1=0, \ 1\leq s\leq N-1, \ 1\leq k\leq N,
$$
$$
\sum\limits_{j=1}^{n}\alpha_{lj}^{\vee}(0)w^2=0,  \ \sum\limits_{j=1}^{n}\alpha_{mj}^{\vee}(0)w^1
=nw^2, \  \sum\limits_{j=1}^{n}e_{kj}(0)w^1=0, \ 1\leq k\leq m, \  \ 1\leq l\leq m-1,$$
$$
\beta_{s}^{\vee}(0)w^2=0, \ \beta_{N-1}^{\vee}(0)w^1=-2mw^1,  e_k(0)w^1=0, \ s=1,\cdots,N-2, N,
\ 1\leq k\leq N.
$$
This means that the $L_{\wo_{2m}}(n,0)\otimes L_{\wo_{n}}(2m,0)$-submodule of $L_{\wo_{2mn}}(1,0)$ generated by $w^1$ is isomorphic to $L_{\wo_{2m}}(n, n\Lambda_{m})\otimes L_{\wo_{n}}(2m,2m\Lambda_N)$, and the $L_{\wo_{2m}}(n,0)\otimes L_{\wo_{n}}(2m,0)$-submodule of $L_{\wo_{2mn}}(1,0)$ generated by $w^2$ is isomorphic to $L_{\wo_{2m}}(n, n\Lambda_{m})\otimes L_{\wo_{n}}(2m,2m\Lambda_{N-1})$. By (2) of Lemma \ref{add1}, the simple vertex operator subalgebras $W$ such that $L_{\wo_{2m}}(n,0)\otimes L_{\wo_{n}}(2m,0)\subseteq W \subseteq L_{\wo_{2mn}}(1,0)$ are in one to one correspondence with the subgroups of $(\Z/2\Z)^3$ if $m\equiv N\equiv 0 \ {\rm mod} \ 2$, and  the subgroups of $\Z/2\Z\times \Z/4\Z$ if   $m\equiv 0 \ {\rm mod} \ 2$ and  $N\equiv 1 \ {\rm mod} \ 2$. By the fusion rules of affine vertex operator algebras of type $D$ (see \cite{Li3}),  we have
$$C_{L_{\wo_{2mn}}(1,0)}(L_{\wo_{2m}}(n,0))=L_{\wo_{n}}(2m,0)\oplus L_{\wo_{n}}(2m,2m\Lambda_1)$$
as desired.
\end{proof}

Recall that $G\leq Aut(L_{\wo_{2mn}}(1,0))$ is the abelian group generated by $\sigma_i$, $1\leq i\leq n$, where $\sigma_i$ is defined as in (\ref{aeq3}). The  following is the main result of this subsection.
\begin{theorem}  For $m\in\Z_{\geq 2}$ and $n=2N\in 2\Z_{+}$, we have
$$
C_{L_{\wo_{2m}}(1,0)^{\otimes n}}(L_{\wo_{2m}}(n,0))=(L_{\wo_{n}}(2m,0)\oplus L_{\wo_{n}}(2m,2m\Lambda_1))^G,$$
where  we mean $L_{\widehat{\frak{sl}_2}}(m,m)\otimes L_{\widehat{\frak{sl}_2}}(m,m)$ by $L_{\wo_{n}}(m,m\Lambda_1)$ if $n=4$.

\end{theorem}
\begin{proof} We may assume that $N\geq 3$.
By Lemma \ref {commutant2},  we have
$$
C_{L_{\wo_{2mn}}(1,0)}(L_{\wo_{2m}}(n,0))=C_{L_{\wo_{2mn}}(1,0)}(U)= L_{\wo_{n}}(2m,0)\oplus L_{\wo_{n}}(2m,2m\Lambda_1).$$
Recall that $L_{\wo_{n}}(2m,0)$ is generated by $\sum\limits_{k=1}^{2m}\psi_{ki}(-\frac{1}{2})\psi_{kj}(-\frac{1}{2}){\bf 1}$, $1\leq i,j\leq n$. Since for $1\leq r\leq n$,
$$
\sigma_r(\sum\limits_{k=1}^{2m}\psi_{ki}(-\frac{1}{2})\psi_{kj}(-\frac{1}{2}){\bf 1})=(-1)^{\delta_{ri}+\delta_{rj}}\sum\limits_{k=1}^{2m}\psi_{ki}(-\frac{1}{2})\psi_{kj}(-\frac{1}{2}){\bf 1},
$$
it follows that $L_{\wo_{n}}(2m,0)$ is invariant under $G$, that is, $G$ acts on $L_{\wo_{n}}(2m,0)$. By (\ref{aeq1}) and (\ref{aeq2}) we have
$$
\sigma_1u^{(1,N+1)}=v^{(1,N+1)}, \ \sigma_{N+1}u^{(1,N+1)}=v^{(1,N+1)}, \ \sigma_{j}u^{(1,N+1)}=u^{(1,N+1)}, \ j\neq 1, N+1.
$$
Notice that $L_{\wo_{n}}(2m,2m\Lambda_1)$ is generated by $u^{(1,N+1)}$ and $v^{(1,N+1)}$ is in the $\frak{so}_n$-submodule of $L_{\wo_{n}}(2m,2m\Lambda_1)$ generated by $u^{(1,N+1)}$, we know that $G$ acts on $L_{\wo_{n}}(2m,2m\Lambda_1)$, also.
Then the lemma  follows from Lemma \ref {auto1}.
\end{proof}

\subsubsection{The case $n=2N+1$, $N\in \Z_+$}\label{sec:4.1.4} In this subsection, we assume that $m\in\Z_{\geq 2}$, $n=2N+1\in 2\Z+1$ and  $N\in\Z_{+}$. Using Table 3 of \cite{KFPX} for $n\geq 5$ and noticing the fact that if $n=3$, ${\mathcal F}_3^{even}$ is isomorphic to $L_{\widehat{\frak{sl}_2}}(2,0)$ and
$$u^{(12)}=\prod_{k=1}^{2m}(\psi_{k1}(-\frac{1}{2})+\sqrt{-1}\psi_{k2}(-\frac{1}{2})){\bf 1}\in C_{L_{\wo_{6m}}(1,0)}(L_{\wo_{2m}}(3,0))$$
 generates the $L_{\widehat{\frak{sl}_2}}(4m,0)$-module $L_{\widehat{\frak{sl}_2}}(4m,4m)$,
  we can deduce the following result.
 \begin{lem}\label{commutant3} For $m\in\Z_{\geq 2}$,   we have
$$C_{L_{\wo_{6m}}(1,0)}(L_{\wo_{2m}}(3,0))=L_{\widehat{\frak{sl}_2}}(4m,0)+L_{\widehat{\frak{sl}_2}}(4m,4m),$$
 and
 $$C_{L_{\wo_{2mn}}(1,0)}(L_{\wo_{2m}}(n,0))=L_{\wo_{n}}(2m,0),$$
for $n=2N+1\geq 5$.

 \end{lem}
 By Lemma \ref{commutant3} and Lemma \ref{auto1}, we have
 \begin{theorem}
For $m\in\Z_{\geq 2}$,  we have
$$C_{L_{\wo_{2m}}(1,0)^{\otimes 3}}(L_{\wo_{2m}}(3,0))=(L_{\widehat{\frak{sl}_2}}(4m,0)\oplus L_{\widehat{\frak{sl}_2}}(4m,4m))^G,$$
and
 $$C_{L_{\wo_{2m}}(1,0)^{\otimes n}}(L_{\wo_{2m}}(n,0))=L_{\wo_{n}}(2m,0)^G,$$
for $n=2N+1\geq 5$.

\end{theorem}

\subsection{Type $B$}
Let $m\in\Z_{\geq 2},n\in\Z_{+}$.
We  now consider the Clifford algebra ${\mathcal Cl}_{(2m+1)n}$ generated by
$$
\psi_{ij}(r), \ r\in\Z+\frac{1}{2}, \ 1\leq i\leq 2m+1, \ 1\leq j\leq n,
$$
with  the  non-trivial relations
$$
[\psi_{ij}(r),\psi_{kl}(s)]_+=\delta_{ik}\delta_{jl}\delta_{r+s,0},
$$
where $1\leq i,k\leq 2m+1, 1\leq j,l\leq n$, $r,s\in\Z+\frac{1}{2}$.

Let ${\mathcal F}_{(2m+1)n}$ be the irreducible ${\mathcal Cl}_{(2m+1)n}$-module generated by the cyclic vector ${\bf 1}$ such that
$$
\psi_{ij}(r)=0, \ {\rm for} \ r>0, \ 1\leq i\leq 2m+1, \ 1\leq j\leq n.
$$
Define the following fields on ${\mathcal F}_{(2m+1)n}$
$$
\psi_{ij}(z)=\sum\limits_{r\in\Z}\psi_{ij}(r+\frac{1}{2})z^{-r-1}.
$$
The fields $\psi_{ij}(z)$, $1\leq i\leq 2m+1, 1\leq j\leq n$ generate on ${\mathcal F}_{(2m+1)n}$ the unique structure  of a simple vertex superalgebra \cite{FFR,K2,KW,Li1,AP}.
Recall from \cite{FF1} that
$$
{\mathcal F}_{(2m+1)n}^{even}\cong L_{\wo_{(2m+1)n}}(1,0).
$$
\subsubsection{The case $n=2$}  Let $\bigoplus_{i=1}^{2m+1}{\mathbb Z}\epsilon^{i}$  be a lattice such that $(\epsilon^i,\epsilon^j)=\delta_{ij}$. Set
$$
\gamma=\sum\limits_{i=1}^{2m+1}\epsilon^i.
$$
Let $V_{\Z 2\gamma}$ be the lattice vertex operator algebra associated with the lattice $L=\Z 2\gamma$. Let $\sigma$ be the automorphism of $V_{\Z 2\gamma}$ defined by
\begin{align*}
\sigma(\gamma(-n_{1})\gamma(-n_{2})\cdots \gamma(-n_{k})
e^{\alpha})=(-1)^{k}\gamma(-n_{1})\gamma(-n_{2})\cdots
\gamma(-n_{k}) e^{-\alpha}
\end{align*}
for $n_i\geq1, 1\leq i\leq k$ and $\alpha\in L$.
The following result follows from [AP].
\begin{theorem}
$$
C_{L_{\widehat{\frak{so}}_{2m+1}}(1,0)^{\otimes 2}}(L_{\widehat{\frak{so}}_{2m+1}}(2,0))\cong V_{\mathbb Z2\gamma}^{\sigma}.$$
\end{theorem}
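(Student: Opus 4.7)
My plan is to parallel the fermionic-realization strategy used in Sections 4.1.3--4.1.4, adapting it to the type $B$ setting. First I would realize $L_{\wo_{2m+1}}(1,0)^{\otimes 2}$ as a vertex subalgebra of $L_{\wo_{4m+2}}(1,0)$ via the identification $\mathcal{F}_{2m+1}\otimes \mathcal{F}_{2m+1}\cong \mathcal{F}_{4m+2}$, with generating fermions $\psi_{i,j}(r)$ for $1\le i\le 2m+1$, $j\in\{1,2\}$, $r\in\Z+\tfrac12$. Define the VOA automorphism $\sigma$ of $L_{\wo_{4m+2}}(1,0)$ that sends $\psi_{i,1}\mapsto -\psi_{i,1}$ and fixes the $\psi_{i,2}$. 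A parity count shows $L_{\wo_{4m+2}}(1,0)^{\sigma}=L_{\wo_{2m+1}}(1,0)^{\otimes 2}$, and the diagonal subalgebra $L_{\wo_{2m+1}}(2,0)$, generated by the bilinears $\sum_{j=1}^{2}\psi_{i,j}(-\tfrac12)\psi_{k,j}(-\tfrac12){\bf 1}$, is pointwise $\sigma$-fixed.

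Next I would compute $C_{L_{\wo_{4m+2}}(1,0)}(L_{\wo_{2m+1}}(2,0))$. The element $J=\sqrt{-1}\sum_{i=1}^{2m+1}\psi_{i,1}(-\tfrac12)\psi_{i,2}(-\tfrac12){\bf 1}$ commutes with the diagonal orthogonal action and satisfies $J_{(1)}J=(2m+1){\bf 1}$, so it generates a Heisenberg current whose Cartan vector has square $2m+1$. The central-charge equality $c(L_{\wo_{4m+2}}(1,0))-c(L_{\wo_{2m+1}}(2,0))=(2m+1)-2m=1$ shows the commutant has central charge $1$. Identifying $J$ with $2\gamma$ up to rescaling, and observing that any odd-charge sector $e^{(2k+1)\gamma}$ of the superlattice would carry odd total fermion parity and hence fail to lie in $L_{\wo_{4m+2}}(1,0)$, I would then invoke the finite classification of extensions of $L_{\wo_{2m+1}}(2,0)\otimes M(1)$ inside $L_{\wo_{4m+2}}(1,0)$ (as in \cite{KFPX}, \cite{AP}) to conclude that the commutant is precisely the integer-weight lattice VOA $V_{\Z 2\gamma}$.

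Finally I descend to the tensor product. Since $\sigma(J)=-J$, the induced action of $\sigma$ on the commutant $V_{\Z 2\gamma}$ coincides with the lift of the lattice involution $2\gamma\mapsto -2\gamma$ specified in the theorem statement. As $\sigma$ fixes $L_{\wo_{2m+1}}(2,0)$ pointwise, taking $\sigma$-invariants intertwines with the commutant operation, giving
\[
C_{L_{\wo_{2m+1}}(1,0)^{\otimes 2}}(L_{\wo_{2m+1}}(2,0))\;=\;C_{L_{\wo_{4m+2}}(1,0)}(L_{\wo_{2m+1}}(2,0))^{\sigma}\;\cong\; V_{\Z 2\gamma}^{\sigma}.
\]

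The main obstacle is the middle step: pinning down the commutant inside $L_{\wo_{4m+2}}(1,0)$ as the full lattice VOA $V_{\Z 2\gamma}$, rather than merely the Heisenberg $M(1)$ generated by $J$ or a proper sublattice extension. Central-charge matching rules out any free-boson excess, but one still has to verify that the lattice generators $e^{\pm 2\gamma}$ are realized in $L_{\wo_{4m+2}}(1,0)$ and commute with $L_{\wo_{2m+1}}(2,0)$ --- either by constructing them explicitly as normally-ordered products of $2(2m+1)$ fermions of total $J$-charge $\pm 2(2m+1)$, or by appealing to the extension classification of \cite{KFPX}. This is precisely the identification carried out by Adamovi\'c--Per\v{s}e in \cite{AP}, which is the source the paper cites for the theorem.
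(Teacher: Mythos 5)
Your argument is correct and is essentially the paper's approach: the paper disposes of this case by citing Adamovi\'c--Per\v{s}e, and what you have written is a reconstruction of exactly that fermionic dual-pair argument (realizing $L_{\wo_{2m+1}}(1,0)^{\otimes 2}$ as the fixed points of an involution of $L_{\wo_{4m+2}}(1,0)$, identifying the commutant there as $V_{\Z 2\gamma}$ via the charge-$(2m+1)$ Heisenberg current and the extension classification, then passing to $\sigma$-invariants), which is also the template the paper uses for the $n\geq 3$ cases. The one point you rightly flag --- that the commutant inside $L_{\wo_{4m+2}}(1,0)$ is the full lattice VOA $V_{\Z 2\gamma}$ and not merely the Heisenberg subalgebra --- is precisely what the citation to \cite{AP}/\cite{KFPX} is carrying in the paper as well, so no gap remains.
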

\subsubsection{The case $n\geq 3$}
As in Subsection 4.1.2,
 we define the vertex operator algebra automorphism $\sigma_i$ of $L_{\wo_{(2m+1)n}}(1,0)$ for $1\leq i\leq n$ as follows:
$$
\sigma_{i}(\psi_{kj}(-\frac{1}{2})\psi_{rs}(-\frac{1}{2}){\bf 1})=(-1)^{\delta_{ij}+\delta_{is}}\psi_{kj}(-\frac{1}{2})\psi_{rs}(-\frac{1}{2}){\bf 1}.$$
Then $$\sigma_{i}^2={\rm id}.$$

Denote by $G$ the automorphism
group of $L_{\wo_{(2m+1)n}}(1,0)$ generated by $\{\sigma_{i}$, $1\leq i\leq n\}$.  Then $G$ is an abelian group. Set
$$
L_{\wo_{(2m+1)n}}(1,0)^{G}=\{v\in L_{\wo_{(2m+1)n}}(1,0) \mid g(v)=v, \ g\in G\}.$$

Similar to the proof of Lemma \ref{commutant2} and Lemma \ref{commutant3}, using Table 3 of \cite{KFPX} we can deduce that for $m\geq 2$ and $n\geq 4$,
$$
C_{L_{\widehat{\frak{so}}_{(2m+1)n}}(1,0)}(L_{\widehat{\frak{so}}_{2m+1}}(n,0))=L_{\widehat{\frak{so}}_{n}}(2m+1,0).
$$
Then as  discussions in Section \ref{sec:4.1.4}, we have
\begin{theorem}\label{main5}
For $m\geq 2$, we have
$$C_{L_{\widehat{\frak{so}}_{2m+1}}(1,0)^{\otimes 3}}(L_{\widehat{\frak{so}}_{2m+1}}(3,0))=L_{\widehat{\frak{sl}_2}}(4m+2,0)^G,$$
and
 $$
C_{L_{\widehat{\frak{so}}_{2m+1}}(1,0)^{\otimes n}}(L_{\widehat{\frak{so}}_{2m+1}}(n,0))=L_{\widehat{\frak{so}}_{n}}(2m+1,0)^G$$
for $n\geq 4$.

\end{theorem}
\begin{remark} By the above results,  \cite[Theorem 5.6]{Li4}, \cite[Theorem 1]{M}, and  \cite[Theorem 5.24]{CM}, the commutant vertex operator algebra $C_{L_{\widehat{\frak{so}_{m}}}(1,0)^{\otimes n}}(L_{\widehat{\frak{so}_{m}}}(n,0))$ is regular for $m\geq 4,n\geq 2$. Then the classification of irreducible modules of the commutant vertex operator algebra follows from \cite{DRX}.
\end{remark}

\end{document}